\newtheorem{theorem}{Theorem}
\newtheorem{proposition}[theorem]{Proposition}
\newtheorem{definition}{Definition}
\newenvironment{proof}[1][Proof.]{\begin{trivlist}
\item[\hskip \labelsep {\bfseries #1}]}{\end{trivlist}}
\newcommand{\qed}{\hfill \ensuremath{\Box}}
\newcommand{\half}{\mbox{$\textstyle \frac{1}{2}$}}
\newcommand{\re}{\mbox{$\rm e$}}
\newcommand{\rd}{\mbox{$\rm d$}}
\begin{document}

\title{Signal processing with L\'evy information}

\author {\textsc{Dorje C.~Brody$^{1}$, Lane P.~Hughston$^{2}$, Xun Yang$^{3,4}$}}
\affiliation{$^{1}$Mathematical Sciences, Brunel University, Uxbridge UB8 3PH, UK \\ 
$^{2}$Department of Mathematics, University College London, London WC1E Ê6BT, UK \\ 
$^{3}$Department of Mathematics, Imperial College London, London SW7 2BZ, UK \\ 
$^{4}$Shell International Limited, Shell Centre, London SE1 7NA, UK}

\begin{abstract}
L\'evy processes, which have stationary independent increments, are ideal for 
modelling the various types of noise that can arise in communication channels.  If a L\'evy 
process admits exponential moments, then there exists a parametric family of measure 
changes called Esscher transformations. If the parameter is replaced with an independent 
random variable, the true value of which represents a ``message'', then under the transformed 
measure the original L\'evy process takes on the character of an ``information process''. In this 
paper we develop a theory of such L\'evy information processes. The underlying L\'evy process, 
which we call the fiducial process, represents the ``noise type''. Each such noise type is capable 
of carrying a message of a certain specification. A number of examples are worked out in detail, 
including information processes of the Brownian, Poisson, gamma, variance gamma, negative 
binomial, inverse Gaussian, and normal inverse Gaussian type. Although in general there is no 
additive decomposition of information into signal and noise, one is led nevertheless for each 
noise type to a well-defined scheme for signal detection and enhancement relevant to a variety 
of practical situations.
\\ 

\noindent \textbf{Key Words}: Signal processing; L\'evy process; Esscher transformation; 
nonlinear filtering; innovations process; information process; cybernetics.\\


\end{abstract}

\maketitle

\section{Introduction}

\noindent The idea of filtering the noise out of a noisy message as a way of increasing 
its information content is illustrated by Norbert Wiener in his book {\em Cybernetics} 
(Wiener 1948)  by means of the following example. The true message is represented by a 
variable $X$ which has a known probability distribution. An agent wishes to determine as best 
as possible the value of $X$, but due to the presence of noise the agent can only observe 
a noisy version of the message of the form $\xi=X+\epsilon$, where $\epsilon$ is
independent of $X$. Wiener shows how, given the observed value of 
the noisy message $\xi$, the original distribution of $X$ can be transformed into an improved 
\textit{a posteriori} distribution that has a higher information content. 
The \textit{a posteriori} distribution can then be used to determine a best estimate for the value 
of $X$. 

The  theory of filtering was developed in the 1940s when the inefficiency of anti-aircraft fire 
made it imperative to introduce effective filtering-based devices (Wiener 1949, 1954). A 
breakthrough came with the work of Kalman, who reformulated the theory in a manner more 
well-suited for dynamical state-estimation problems (Kailath 1974, Davis 1977). This period 
coincided with the emergence of the modern control theory of Bellman and Pontryagin 
(Bellman 1961, Pontryagin \textit{et al}. 1962). Owing to the importance of its applications, 
much work has been carried out since then. According to an estimate of Kalman (1994), 
over 200,000 articles and monographs had been published on applications of the Kalman 
filter alone. The theory of stochastic filtering, in its modern form, is not much different 
conceptually from the elementary example described by Wiener in the 1940s. The message, 
instead of being represented by a single variable, in the general setup can take the form of a 
time series (the ``signal'' or ``message'' process). The information made available to the 
agent also takes the form of a time series (the ``observation'' or ``information'' process), 
typically given by the sum of two terms, the first being a functional of the signal process, 
and the second being a noise process. The nature of the signal process can be rather 
general, but in most applications the noise is chosen to be a Wiener process  (see, e.g., 
Liptser \& Shiryaev 2000, Xiong 2008, Bain \& Crisan 2010). There is no reason 
\textit{a priori}, however, why an information process should be ``additive'', 
or even why it should be given as a functional of a signal process and a 
noise process. From a mathematical perspective, it seems that the often proposed ansatz 
of an additive decomposition of the observation process is well-adapted to the situation 
where the noise is Gaussian, but  is not so natural when the noise is
discontinuous. Thus while a good deal of recent research has been carried out on 
the problem of  filtering noisy information containing jumps (see, e.g., Rutkowski 1994, Ahn 
\& Feldman 1999, Meyer-Brandis \& Proske 2004, Poklukar 2006, Popa \& Sritharan 2009, 
Grigelionis \& Mikulevicius 2011, and references cited therein), such work has usually been 
pursued under the assumption of an additive relation between signal and noise, and it is not 
unreasonable to ask whether a more systematic treatment of the problem might be available 
that involves no presumption of additivity and that is more naturally adapted to the 
mathematics of the situation. 

The purpose of the present paper is to introduce a broad  class of information processes suitable 
for modelling situations involving discontinuous signals, discontinuous noise, and discontinuous 
information. No assumption is made to the effect that information can be expressed as a function 
of signal and noise.  Instead, information processes are classified according to their  ``noise type''.  
Information processes of the same noise type are then distinguished from one another by the 
messages that they carry. Each noise type is associated to a 
L\'evy process, which we call the \textit{fiducial process}. The fiducial process is the information 
process that results for a given noise type in the case of a null message, and can be thought of as 
a ``pure noise'' process of that noise type. Information processes can then be classified by the 
characteristics of the associated fiducial processes. To keep the discussion elementary, 
we consider the case of a one-dimension fiducial process and examine the situation where the 
message is represented by a single random variable. The goal is to construct the optimal filter for 
the class of information processes that we consider in the form of a map that takes the  \textit{a 
priori} distribution of the message to an  \textit{a posteriori} distribution that depends on the 
information that has been made available. A number of examples will be presented. The results 
vary remarkably in detail and character for the different types of filters considered, and yet there 
is an overriding unity in the general scheme, which allows for the construction of a multitude of 
examples and applications. 

A synopsis of the main ideas, which we develop more fully in the remainder of the paper, can 
be presented as follows. We recall the idea of the Esscher transform as a change of 
probability measure on a probability space $({\mathit\Omega},{\mathcal F}, {\mathbb P_0})$ 
that supports a  L\'evy process $\{\xi_t\}_{t \geq 0}$ that possesses ${\mathbb P}_0$-exponential 
moments. The space of admissible moments is  the set 
$A = \{w \in \mathds R :  {\mathbb E}^{\mathbb P_0}[\exp(w \xi_t)] < \infty \}$. The associated 
L\'evy exponent $\psi(\alpha) = t^{-1} \ln{\mathbb E}^{{\mathbb P}_0}[\exp(\alpha \xi_t)]$ then 
exists for all $\alpha \in A_{\mathds C} :=\{ w \in {\mathds C} \,:\, {\rm Re}\, w \in A\}$, and does not
depend on $t$.  A parametric family of measure changes ${\mathbb P}_0 
\rightarrow {\mathbb P}_{\lambda}$ commonly called Esscher transformations can be 
constructed by use of the exponential martingale family 
$\{\rho^{\lambda}_t\}_{t \geq 0}$, defined for each  $\lambda \in A$ by $\rho_t^\lambda = \exp 
\left({\lambda \xi_t - \psi(\lambda) t}\right)$. If $\{\xi_t\}$ is a ${\mathbb P}_0$-Brownian motion, 
then $\{\xi_t\}$ is $\mathbb P_{\lambda}$-Brownian with drift $\lambda$; if $\{\xi_t\}$ 
is a $\mathbb P_0$-Poisson process with intensity $m$, then $\{\xi_t\}$ is 
${\mathbb P}_{\lambda}$-Poisson with intensity $\re^{\lambda} m$; if $\{\xi_t\}$ is a 
$\mathbb P_0$-gamma process with rate parameter $m$ and scale parameter $\kappa$, 
then $\{\xi_t\}$ is ${\mathbb P}_{\lambda}$-gamma with rate parameter $m$ and scale 
parameter $\kappa / (1 - \lambda)$. Each case is different in character. 
A natural generalisation of the Esscher transform results when the parameter 
$\lambda$ in the measure change is replaced by a random variable $X$. From the 
perspective of the new measure ${\mathbb P}_X$, the process $\{\xi_t\}$ retains the 
``noisy'' character of its  $\mathbb P_0$-L\'evy origin, but also carries information about $X$. 
In particular, if one assumes that $X$ and $\{\xi_t\}$ are ${\mathbb P}_0$-independent, and 
that the support of $X$ lies in $A$, then we say that $\{\xi_t\}$ defines a {\em L\'evy 
information process} under ${\mathbb P}_X$ carrying the message $X$. Thus, the 
change of measure inextricably intertwines signal and noise. More 
abstractly, we say that on a probability space $({\mathit\Omega},{\mathcal F}, {\mathbb P})$ 
a random process $\{\xi_t\}$ is a L\'evy information process with message (or ``signal'') $X$ 
and noise type (or ``fiducial exponent'') $\psi_0(\alpha)$ if $\{\xi_t\}$ is {\em conditionally} a 
$\mathbb P$-L\'evy given  $X$, with L\'evy exponent $\psi_0(\alpha + X) - 
\psi_0(X)$ for $\alpha \in \mathds C^{\rm I} := \{w \in \mathds C : {\rm Re} \, w = 0\} $. We 
are thus able to classify L\'evy information processes by their noise type, and for 
each noise type we can specify the class of random variables that are admissible as signals 
that can be carried in the environment of such noise. We consider a number of different 
noise types, and construct explicit representations of the associated information processes. 
We also derive an expression for the optimal filter in the general situation, which 
transforms the {\em a priori} distribution of the signal to the improved {\em a posteriori} 
distribution that can be inferred on the basis of received information. 

The plan of the paper is as follows. In Section~\ref{sec:2}, after recalling some facts about 
processes with stationary and independent increments, we define L\'evy information, and in 
Proposition~\ref{prop:1} we show that  the signal carried by a L\'evy information process is 
effectively ``revealed'' after the passage of sufficient time.  In Section~\ref{sec:3} we present 
in Proposition~\ref{prop:2} an explicit  construction using a change of measure technique that 
ensures the existence of L\'evy information processes, and in Proposition~\ref{prop:3} we 
prove a converse to the effect that any L\'evy information process can be obtained in this way. 
In Proposition~\ref{prop:4} we construct the optimal filter for general L\'evy information 
processes, and in Proposition~\ref{prop:5} we show that such processes have the Markov 
property. In Proposition~\ref{prop:6} we establish a result that indicates in more detail how the information 
content of the signal is coded into the structure of an information process. Then in 
Proposition~\ref{prop:7} we present a general construction of the so-called innovations process 
associated with L\'evy information. Finally in Section~\ref{sec:4} we proceed to examine 
a number of specific examples of L\'evy information processes, for which explicit 
representations are constructed in Propositions~\ref{prop:8}--\ref{prop:15}. 

\section{L\'evy information}
\label{sec:2}

\noindent We assume that the reader is familiar with the theory of L\'evy processes 
(Bingham 1975, Sato 1999, Appelbaum 2004, Bertoin 2004, Protter 2005, Kyprianou 2006). 
For an overview of some of the specific L\'evy processes considered later in this paper we 
refer the reader to Schoutens (2003). A real-valued process $\{\xi_t\}_{t \geq 0}$ on a 
probability space $({\mathit\Omega},{\mathcal F},{\mathbb P})$ is a L\'evy process if: (i) 
$\mathbb P (\xi_0 =0)=1$, (ii) $\{\xi_t\}$ has stationary and independent increments, (iii) 
$\lim_{t \to s} \mathbb P (|\xi_t-\xi_s| >\epsilon) = 0$, and (iv) $\{\xi_t\}$ is almost surely 
c\`adl\`ag. For a L\'evy process $\{\xi_t\}$ to give rise to a class of 
information processes, we require that it should possess exponential moments. 
Let us consider the set defined for some (equivalently for all) $t >0$ by 
\begin{eqnarray}
A= \left\{ w \in {\mathds R} \,:\, {\mathbb E}^{\mathbb P}[\exp(w \xi_t)] < \infty \right\}.
\end{eqnarray}
If $A$ contains points other than $w = 0$, then we say that $\{\xi_t\}$ possesses exponential 
moments. We define a function $\psi : A \to {\mathds R}$ called the L\'evy exponent (or 
cumulant function), such that
\begin{eqnarray}
{\mathbb E}^{\mathbb P}\left[\exp(\alpha \,\xi_t)\right] = \exp(\psi(\alpha) \,t)
\label{exponent}
\end{eqnarray}
for $\alpha \in A$. If a L\'evy process possesses exponential moments, then an exercise
shows that $\psi(\alpha)$ is convex on $A$, that the mean and variance of $\xi_t$ are given 
respectively by $\psi'(0)\,t$ and $\psi''(0)\,t$, and that as a consequence of the convexity of 
$\psi(\alpha)$ the marginal exponent $\psi'(\alpha)$ possesses a unique inverse $I(y)$ such 
that $I(\psi'(\alpha))=\alpha$ for $\alpha\in A$. The L\'evy exponent extends to a function 
$\psi : A_{\mathds C} \to {\mathds C}$  where 
$A_{\mathds C} =\{ w \in {\mathds C} \,:\, {\rm Re}\,w \in A\}$,
and it can be shown (Sato 1999, Theorem 25.17) that $\psi(\alpha)$ admits a 
L\'evy-Khintchine representation of the form
\begin{eqnarray}
\psi(\alpha) = p\alpha + \frac{1}{2}q \alpha^2 + \int_{{\mathds R} \backslash \{0\}} 
(\re^{\alpha z} -1 -\alpha z {\mathds 1}\{|z|<1\}) \nu(\rd z)
\label{eq:2}
\end{eqnarray}
with the property that (\ref{exponent}) holds for for all $\alpha \in A_{\mathds C}$. Here 
${\mathds 1}\{\cdot\}$ denotes the indicator function, $p\in{\mathds R}$ and $q\geq 0$ 
are constants, and the so-called L\'evy measure $\nu(\rd z)$ is a positive measure defined 
on ${\mathds R} \backslash \{0\}$ satisfying 
\begin{eqnarray}
\int_{{\mathds R} \backslash \{0\}} (1 \wedge z^2 )\nu(\rd z) < \infty.
\end{eqnarray}
If the L\'evy process possesses exponential moments, then for $\alpha \in A$ we also have
\begin{eqnarray}
\int_{{\mathds R} \backslash \{0\}} \re^{\alpha z} \, {\mathds 1}\{|z| \geq 1\} \, \nu(\rd z) < \infty .
\end{eqnarray}
The L\'evy measure has the following interpretation: if $B$ is a measurable subset of 
${\mathds R}\backslash \{0\} $, then $\nu(B)$ is the rate at which jumps arrive for which the 
jump size lies in $B$. Consider  the sets defined for $n \in {\mathds N}$ by $B_n = \{ z \in 
{\mathds R} \, | \, 1/n \leq |z| \leq 1 \}$. If $\nu(B_n)$ tends to infinity for large $n$ we say 
that $\{\xi_t\}$ is a process of  infinite activity, meaning that the rate of arrival of small jumps 
is unbounded. If $\nu({\mathds R} \backslash \{0\}) < \infty$ one says that $\{\xi_t\}$ has 
finite activity. We refer to the data $K = (p, q, \nu)$ as the characteristic triplet (or
``characteristic'') of the associated L\'evy process. Thus we can classify a L\'evy process 
abstractly by its characteristic $K$, or, equivalently,  its exponent 
$\psi(\alpha)$. This means one can speak of a ``type'' of L\'evy noise by reference to the 
associated characteristic or exponent. 

Now suppose we fix a measure $\mathbb P_0$ on a measurable space 
$({\mathit\Omega},{\mathcal F})$, and let $\{\xi_t\}$ be $\mathbb P_0$-L\'evy, with 
exponent $\psi_0(\alpha)$. There exists a parametric family of probability measures 
$\{\mathbb P_{\lambda}\}_{\lambda\in A}$ on $({\mathit\Omega},{\mathcal F})$ such that for 
each choice of $\lambda$ the process $\{\xi_t\}$ is ${\mathbb P}_{\lambda}$-L\'evy. The 
changes of measure arising in this way are called Esscher transformations (Esscher 1932, 
Gerber \& Shiu 1994, Chan 1999, Kallsen \& Shiryaev 2002, Hubalek \& Sgarra 2006). Under 
an Esscher transformation the characteristics of a L\'evy process are transformed from one 
type to another, and one can speak of a ``family'' of L\'evy processes interrelated by Esscher 
transformations. The relevant change of measure can be specified by use of the process 
$\{\rho_t^\lambda\}$ defined for $\lambda \in A$ by 
\begin{eqnarray}
\rho_t^\lambda := \left. \frac{\rd{\mathbb P}_\lambda}
{\rd{\mathbb P}_0}\right|_{{\mathcal F}_t} = 
\exp\left({\lambda \xi_t - \psi_0(\lambda) t}\right) , 
\label{Esscher martingale}
\end{eqnarray}
where ${\mathcal F}_t  = \sigma \left[\{\xi_s\}_{0 \leq s \leq t}\,\right]$. One can check that 
$\{\rho_t^\lambda\}$ is an $( \{ {\mathcal F}_t \} , \mathbb P_0)$-martingale: indeed, as a 
consequence of the fact that $\{\xi_t\}$ has stationary and independent increments we have
\begin{eqnarray}
{\mathbb E}^{\mathbb P_0}_s[\rho_t^\lambda] = 
{\mathbb E}^{\mathbb P_0}_s[\re^{\lambda (\xi_t-\xi_s)}] \, \re^{\lambda \xi_s-t 
\psi_0(\lambda)} 
= \rho_s^\lambda 
\end{eqnarray}
for $s \leq t$, where ${\mathbb E}^{\mathbb P_0}_t[\,\cdot\,]$ denotes conditional 
expectation under $\mathbb P_0$ with respect to $\mathcal F_t $. It is straightforward to 
show that $\{\xi_t\}$ has ${\mathbb P}_{\lambda}$-stationary and independent increments, 
and that the ${\mathbb P}_{\lambda}$-exponent of $\{\xi_t\}$, which is  defined on the set 
$A_{\mathds C}^{\lambda} :=\{w \in {\mathds C}\, | \, {\rm Re} \, w + \lambda \in A\}$, is 
given by 
\begin{eqnarray}
\psi_\lambda(\alpha) := t^{-1} \ln  {\mathbb E}^{{\mathbb P}_{\lambda}}
[\exp(\alpha \xi_t)] = \psi_0(\alpha + \lambda) - \psi_0(\lambda),
\end{eqnarray}
from which by use of the L\'evy-Khintchine representation (\ref{eq:2}) one can work out the 
characteristic triplet $K_\lambda$ of $\{\xi_t\}$ under ${\mathbb P}_{\lambda}$. We observe 
that if the Esscher martingale (\ref{Esscher martingale}) is expanded as a power series in 
$\lambda$, then the resulting coefficients, which are given by polynomials in $\xi_t$ and $t$, 
form a so-called Sheffer set (Schoutens \& Teugels 1998), each element of which defines 
an $( \{ {\mathcal F}_t \} , \mathbb P_0)$-martingale. The first three of these polynomials take 
the form $Q^1(x,t)= x - \psi't$, $Q^2(x,t)=\frac{1}{2} [(x - \psi't)^2 - \psi''t]$, and
$Q^3(x,t)=\frac{1}{6} [(x - \psi't)^3 - 3\psi''t (x - \psi't)- \psi'''t]$, where  $\psi' = \psi_0'(0)$, 
$\psi'' = \psi_0''(0)$, and $\psi''' = \psi_0'''(0)$.
The corresponding polynomial L\'evy-Sheffer  martingales are 
given by $Q^1_t = Q^1(\xi_t,t)$, $Q^2_t = Q^2(\xi_t,t)$, and $Q^3_t = Q^3(\xi_t,t)$.

In what follows we use the terms ``signal'' and ``message'' interchangeably. We write 
$\mathds C^{\rm I} = \{w \in \mathds C : {\rm Re} \, w = 0\} $. For any random variable $Z$ 
on $({\mathit\Omega},{\mathcal F}, {\mathbb P})$ we write ${\mathcal F}^Z = \sigma[Z]$, 
and when it is convenient we write ${\mathbb E}^{{\mathbb P}}[ \,\cdot \,| Z]$ for 
${\mathbb E}^{\mathbb P}[ \,\cdot \,| {\mathcal F}^Z]$. For processes we use both of the  
notations $\{Z_t\}$ and $\{Z(t)\}$, depending on the context.

With these background remarks in mind, we are in a position to define a \textit{L\'evy 
information process}. We confine the discussion to the case of a ``simple'' message, 
represented by a random variable $X$. In the situation when the noise is Brownian 
motion, the information admits a linear decomposition into signal and noise. In the general 
situation the relation between signal and noise is more subtle, and has the 
character of a fibre space, where one thinks of the points of the base 
space as representing the different noise types, and the points of the fibres as corresponding 
to the different information processes that one can construct in association with a given 
noise type. Alternatively, one can think of the base as being the convex space of L\'evy characteristics, and the fibre over a given point of the base as the convex space
of messages that are compatible with the associated noise type.

We fix a probability space $({\mathit\Omega},{\mathcal F}, {\mathbb P})$, and an Esscher 
family of L\'evy characteristics $K_{\lambda}$, $\lambda \in A$, with associated L\'evy 
exponents $\psi_\lambda(\alpha)$, $\alpha \in A_{\mathds C}^{\lambda} $. We refer to $K_0$ 
as the fiducial characteristic, and $\psi_0(\alpha)$ as the fiducial exponent. The intuition here 
is that the abstract  L\'evy process of characteristic $K_0$ and exponent $\psi_0(\alpha)$, which 
we call the ``fiducial'' process, represents the noise type of the associated  information process. 
Thus we can use $K_0$, or equivalently $\psi_0(\alpha)$, to label the noise type. 
\begin{definition}
By a L\'evy information process with fiducial characteristic $K_0$, carrying the  message $X$, 
we mean a random process $\{\xi_t\}$, together with a random variable $X$, such that $\{\xi_t\}$ 
is conditionally $K_X$-L\'evy given $\mathcal F^X$.
\label{def:1}
\end{definition}

\noindent Thus, given $\mathcal F^X$ we require $\{\xi_t\}$ to have conditionally independent 
and stationary increments under $\mathbb P$, and to possess a conditional exponent of the 
form
\begin{eqnarray}
\psi_{X}(\alpha) := t^{-1} \ln  {\mathbb E}^{{\mathbb P}}
[\exp(\alpha \xi_t)\, | \, {\mathcal F}^X] 
= \psi_0(\alpha + X) - \psi_0(X)
\label{eq:zz8}
\end{eqnarray}
for $\alpha \in \mathds C^{\rm I}$, where $\psi_0(\alpha)$ is the fiducial exponent of the 
specified noise type. It is implicit in the statement of Definition~\ref{def:1} that a certain 
compatibility condition holds between the message and the noise type. For any random 
variable $X$ we define its support $S_X$ to be the smallest closed set $F$ with the property 
that ${\mathbb P}(X \in F) = 1$. Then we say that $X$ is compatible with the fiducial exponent 
$\psi_0(\alpha)$ if $S_X \subset A$. Intuitively speaking, the compatibility condition ensures 
that we can use $X$ to make a random Esscher transformation. In the theory of signal 
processing, it is advantageous to require that the variables to be estimated should be 
square integrable. This condition ensures that the conditional expectation exists and admits 
the interpretation as a best estimate in the sense of least squares. For our purpose it will 
suffice to assume throughout the paper that the information process is square integrable 
under ${\mathbb P}$. This in turn implies that $\psi'(X)$ is square integrable, and that 
$\psi''(X)$ is integrable. Note that we do not require that the L\'evy information process 
should possess exponential moments under ${\mathbb P}$, but a sufficient condition for 
this to be the case is that there should exist a nonvanishing real number $\epsilon$ such 
that $\lambda+\epsilon \in A$ for all $\lambda \in S_X$.

To gain a better understanding of the sense in which the information process $\{\xi_t\}$ actually
``carries'' the message $X$, it will be useful to investigate its asymptotic behaviour. We write 
$I_0(y)$ for the inverse marginal fiducial exponent.

\begin{proposition} 
Let $\{\xi_t\}$ be a L\'evy information process with fiducial exponent  $\psi_0(\alpha)$ 
and message $X$. Then for  every $\epsilon > 0$ we have
\begin{eqnarray}
\lim_{t\to\infty} {\mathbb P}\big[ | I_0(t^{-1}\xi_t) - X | \geq \epsilon\big] = 0.
\label{eq:25.1}
\end{eqnarray}
\label{prop:1}
\end{proposition}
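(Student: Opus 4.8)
The plan is to condition on $\mathcal F^X$ and exploit the law of large numbers for L\'evy processes, then remove the conditioning. Fix $\epsilon > 0$. Conditionally on $\mathcal F^X$, the process $\{\xi_t\}$ is L\'evy with exponent $\psi_X(\alpha) = \psi_0(\alpha + X) - \psi_0(X)$, so its conditional mean is $\psi_X'(0)\, t = \psi_0'(X)\, t$. By the strong law of large numbers for L\'evy processes with finite first moment, $t^{-1}\xi_t \to \psi_0'(X)$ almost surely as $t \to \infty$, conditionally on $\mathcal F^X$; integrating out, this convergence holds $\mathbb P$-almost surely. Since $I_0$ is the inverse of the marginal fiducial exponent $\psi_0'$, we have $I_0(\psi_0'(X)) = X$, and the continuity of $I_0$ on the relevant range (which follows from the strict convexity of $\psi_0$, hence strict monotonicity and continuity of $\psi_0'$, hence continuity of its inverse) gives $I_0(t^{-1}\xi_t) \to I_0(\psi_0'(X)) = X$ almost surely. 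Almost sure convergence implies convergence in probability, which is exactly \eqref{eq:25.1}.

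In carrying this out I would first record the conditional first-moment identity: from \eqref{eq:zz8}, differentiating $\psi_X(\alpha)$ at $\alpha = 0$ gives $\mathbb E^{\mathbb P}[\xi_t \,|\, \mathcal F^X] = \psi_0'(X)\, t$, and one should note that $\psi_0'(X)$ is well-defined and finite because $S_X \subset A$ and $\psi_0$ is differentiable on the interior of $A$ — here one may need to assume, or observe from the square-integrability hypothesis already in force, that $X$ takes values where $\psi_0'$ is finite (the paper's standing assumption that $\psi'(X)$ is square integrable covers this). Second, I would invoke the L\'evy strong law: for a L\'evy process $\{\eta_t\}$ with $\mathbb E|\eta_1| < \infty$, one has $\eta_t/t \to \mathbb E[\eta_1]$ a.s.; applied conditionally this yields $t^{-1}\xi_t \to \psi_0'(X)$ on a set of conditional probability one, and then Fubini upgrades this to $\mathbb P$-a.s. convergence. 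Third, I would address the continuity of $I_0$: the excerpt already states that convexity of $\psi_0$ makes $\psi_0'$ strictly increasing with a well-defined inverse $I_0$ satisfying $I_0(\psi_0'(\alpha)) = \alpha$; strict monotonicity plus continuity of $\psi_0'$ forces $I_0$ to be continuous on the interior of the range of $\psi_0'$. Finally I would apply the continuous mapping theorem (or simply compose the a.s. limit with a continuous function) and then weaken a.s. convergence to convergence in probability.

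The main obstacle is making the passage from conditional to unconditional convergence fully rigorous, together with a careful treatment of the endpoints. Applying the L\'evy strong law "conditionally on $\mathcal F^X$" requires that a regular conditional version of the law of $\{\xi_t\}$ given $X$ exists and that, for $\mathbb P_X$-almost every value $x$ of $X$, this conditional law is genuinely that of a L\'evy process with exponent $\psi_0(\cdot + x) - \psi_0(x)$ — this is exactly what Definition~\ref{def:1} asserts, so the content is available, but one must phrase the measure-theoretic step cleanly (e.g. work on the canonical path space, or invoke a disintegration). A secondary subtlety is that if $X$ can take values on the boundary $\partial A$, then $\psi_0'(X)$ might fail to be finite or $I_0$ might fail to be continuous there; but square-integrability of $\psi'(X) = \psi_0'(X)$ rules out infinite values on a positive-probability set, and the convergence statement \eqref{eq:25.1} is about probabilities, so a $\mathbb P$-null set of bad $\omega$ is harmless. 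I expect the rest — the differentiation identity, the continuity of $I_0$, and the implication from a.s.\ to in-probability convergence — to be routine.
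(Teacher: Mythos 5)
Your argument is correct in substance, but it takes a genuinely different route from the paper. You apply the strong law of large numbers for L\'evy processes conditionally on $\mathcal F^X$, upgrade to $\mathbb P$-almost sure convergence of $t^{-1}\xi_t$ to $\psi_0'(X)$ via disintegration, and then compose with the continuous inverse $I_0$ before weakening to convergence in probability. The paper instead stays entirely at the level of the first two conditional moments: from (\ref{eq:zz8}) it reads off ${\mathbb E}^{\mathbb P}[t^{-1}\xi_t\,|\,\mathcal F^X]=\psi_0'(X)$ and ${\rm Var}^{\mathbb P}[t^{-1}\xi_t\,|\,\mathcal F^X]=t^{-1}\psi_0''(X)$, takes the unconditional expectation (using the standing assumption that $\psi_0''(X)$ is integrable) to get ${\mathbb E}^{\mathbb P}[(t^{-1}\xi_t-\psi_0'(X))^2]=t^{-1}{\mathbb E}^{\mathbb P}[\psi_0''(X)]\to 0$, and concludes by Chebyshev that $t^{-1}\xi_t\to\psi_0'(X)$ in probability, whence $I_0(t^{-1}\xi_t)\to X$ in probability. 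The trade-off: your route yields the stronger almost sure convergence, but it imports the L\'evy strong law and requires the measure-theoretic care you flag (a regular conditional law on path space so that the SLLN can be applied for $\mathbb P_X$-a.e.\ value of $X$, plus a Fubini argument over the null sets); the paper's route needs nothing beyond Chebyshev and the square-integrability hypothesis already in force, at the cost of proving only convergence in probability, which is all the proposition asserts. Both arguments share the same final step --- continuity of $I_0$ on the range of $\psi_0'$ together with a continuous-mapping argument --- and both leave the boundary/domain issue (that $t^{-1}\xi_t$ may momentarily lie outside the range of $\psi_0'$) at the same informal level, so that is not a defect peculiar to your version.
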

\begin{proof}
It follows from (\ref{eq:zz8}) that $\psi'_X(0)=\psi'_0(X)$, and hence that at any time $t$ the 
conditional mean of the random variable $t^{-1}\xi_t$ is given by
\begin{eqnarray}
{\mathbb E}^{\mathbb P}\left[t^{-1} \xi_t \,| \, \mathcal F^X \right] = \psi'_0(X) .
\label{eq:22}
\end{eqnarray}
A calculation then shows that the conditional variance of $t^{-1}\xi_t$ takes the form
\begin{eqnarray}
{\rm Var}^{\mathbb P}\left[ t^{-1} \xi_t \, | \, \mathcal F^X \right] 
:= \left. {\mathbb E}^{\mathbb P}\left[ \left ( t^{-1} \xi_t -\psi'_0(X) \right)^2 \, \right | \, 
\mathcal F^X \right] =  \frac {1} {t} \,  \psi_0''(X), 
\label{eq:24}
\end{eqnarray}
which allows us to conclude that 
\begin{eqnarray}
{\mathbb E}^{\mathbb P}\left[ \left ( t^{-1} \xi_t -\psi'_0(X) \right)^2  \right] 
=  \frac {1} {t} \,  {\mathbb E}^{\mathbb P}\left[ \psi_0''(X)  \right] , 
\label{eq:24xx}
\end{eqnarray}
and hence that
\begin{eqnarray}
\lim_{t\to\infty} {\mathbb E}^{\mathbb P}\left[ \left ( t^{-1} \xi_t -\psi'_0(X) \right)^2  \right]  = 0.
\label{eq:25.2}
\end{eqnarray}
On the other hand for all $\epsilon > 0$ we have
\begin{eqnarray}
{\mathbb P}[\, | t^{-1}\xi_t - \psi'_0(X) | \geq \epsilon\, ]
\leq \frac {1}{\epsilon^2} \, {\mathbb E}^{\mathbb P}\left[ \left ( t^{-1} \xi_t -\psi'_0(X) \right)^2 
\right]
\label{eq:25.3}
\end{eqnarray}
by Chebychev's inequality, from which we deduce that
\begin{eqnarray}
\lim_{t\to\infty} \mathbb P[\, | t^{-1}\xi_t - \psi'_0(X) | \geq \epsilon\, ] = 0,
\label{eq:25.4}
\end{eqnarray}
and it follows  that $ I_0(t^{-1}\xi_t)$ converges to $X$ in probability.
 \qed 
\end{proof}

Thus we see that the 
information process does indeed carry information about the message, and in the long run  
``reveals'' it. The intuition here is that as more information is gained we improve our estimate of 
$X$ to the point that the value of $X$ eventually becomes known with near certainty. 

\section{Properties of L\'evy information}
\label{sec:3}

It will be useful if we present a construction that ensures the existence of L\'evy information 
processes. First we select a noise type by specification of a fiducial characteristic $K_0$. 
Next we introduce a probability space $({\mathit\Omega},{\mathcal F},{\mathbb P}_0)$ that 
supports the existence of a ${\mathbb P}_0$-L\'evy process $\{\xi_t\}$ with the given fiducial 
characteristic, together with an independent random variable $X$ that is compatible with $K_0$.  

Write $\{{\mathcal F}_t\}$ for the filtration generated by $\{\xi_t\}$, and $\{{\mathcal G}_t\}$ 
for the filtration generated by $\{\xi_t\}$ and $X$ jointly: ${\mathcal G}_t = \sigma 
[\{\xi_t\}_{0 \leq s \leq t}, X]$.  Let $\psi_0(\alpha)$ be the fiducial exponent associated with 
$K_0$. One can check that the process $\{\rho^X_t\}$ defined by
\begin{eqnarray}
\rho^X_t = \exp\left({X \xi_t - \psi_0(X)\, t}\right) 
\label{eq:12}
\end{eqnarray}
is a $(\{{\mathcal G}_t \},{\mathbb P}_0)$-martingale. We are thus able to introduce a change 
of measure ${\mathbb P}_0 \rightarrow{\mathbb P}_X$ on $({\mathit\Omega},{\mathcal F},
{\mathbb P}_0)$ by setting
\begin{eqnarray}
\left.\frac{\rd {\mathbb P}_{\!X}}{\rd{\mathbb P}_0}\right|_{\mathcal G_t} = \rho^X_t.
\end{eqnarray}
It should be evident that $\{\xi_t\}$ is conditionally ${\mathbb P}_{\!X}$-L\'evy given 
${\mathcal F}^X$, since for fixed $X$ the measure change is an Esscher transformation. 
In particular, a calculation shows that the conditional exponent of $\xi_t$ under 
${\mathbb P}_X$ is given by
\begin{eqnarray}
t^{-1} \ln  {\mathbb E}^{{\mathbb P_X}}\left [\exp ({\alpha \xi_t})\, | \, \mathcal F^X\right ] 
= \psi_0(\alpha + X) - \psi_0(X)
\end{eqnarray}
for $\alpha \in \ \mathds C^{\rm I}$, which shows that the conditions of Definition~\ref{def:1} 
are satisfied, allowing us to conclude the following:
%
\begin{proposition} 
The $\mathbb P_{0}$-L\'evy process $\{\xi_t\}$ is a ${\mathbb P}_{\!X}$-L\'evy information process,
 with  message $X$ and noise type $\psi_0(\alpha)$.  
\label{prop:2}
\end{proposition}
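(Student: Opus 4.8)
The plan is to verify the two defining properties of a L\'evy information process as set out in Definition~\ref{def:1}, namely that under $\mathbb{P}_X$ the process $\{\xi_t\}$ has conditionally stationary and independent increments given $\mathcal{F}^X$, and that its conditional exponent is $\psi_0(\alpha+X)-\psi_0(X)$ for $\alpha\in\mathds{C}^{\rm I}$. Most of the analytic content has already been assembled in the discussion preceding the statement: the key observation is that conditioning on $\mathcal{F}^X$ fixes the value of $X$, and for each fixed value $\lambda\in S_X$ the restriction of the measure change \eqref{eq:12} to $\{\mathcal{F}_t\}$ is precisely the classical Esscher transformation \eqref{Esscher martingale} with parameter $\lambda$, whose properties are recorded in the preamble.

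First I would make precise the conditioning argument. Since $X$ and $\{\xi_t\}$ are $\mathbb{P}_0$-independent and the Radon--Nikodym density $\rho^X_t$ factorizes through the pair $(\xi_t, X)$, one can disintegrate $\mathbb{P}_0$ over $\mathcal{F}^X$ and observe that the regular conditional law of $\{\xi_t\}$ given $X=\lambda$ under $\mathbb{P}_X$ coincides with the law of $\{\xi_t\}$ under $\mathbb{P}_\lambda$, the latter being the Esscher-transformed measure with density $\rho^\lambda_t = \exp(\lambda\xi_t - \psi_0(\lambda)t)$ on $\mathcal{F}_t$. Once this identification is in place, the conditional stationarity and independence of increments follows immediately from the already-cited fact that $\{\xi_t\}$ is $\mathbb{P}_\lambda$-L\'evy for every $\lambda\in A$ (hence for every $\lambda\in S_X\subset A$), and the formula for the conditional exponent follows from the displayed identity $\psi_\lambda(\alpha)=\psi_0(\alpha+\lambda)-\psi_0(\lambda)$ evaluated at $\lambda=X$, which is exactly the computation exhibited just before the statement. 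Finally I would note that the compatibility hypothesis $S_X\subset A$ is what guarantees $\psi_0(X)$ and $\psi_0(\alpha+X)$ are almost surely well defined, so that $\rho^X_t$ is genuinely a density and \eqref{eq:zz8} makes sense.

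The step requiring the most care — and the one I would treat as the main obstacle — is the rigorous justification that $\{\rho^X_t\}$ is a genuine $(\{\mathcal{G}_t\},\mathbb{P}_0)$-martingale with unit expectation, so that $\mathbb{P}_X$ is a bona fide probability measure, and that the conditioning/disintegration is valid. The martingale property should be checked by conditioning on $\mathcal{G}_s$ for $s\le t$: using that $\xi_t-\xi_s$ is independent of $\mathcal{G}_s$ under $\mathbb{P}_0$ and that $X$ is $\mathcal{G}_s$-measurable, one gets $\mathbb{E}^{\mathbb{P}_0}[\rho^X_t\mid\mathcal{G}_s] = \rho^X_s\,\mathbb{E}^{\mathbb{P}_0}[e^{X(\xi_t-\xi_s)}\mid\mathcal{G}_s]\,e^{-\psi_0(X)(t-s)}$, and the remaining conditional expectation equals $e^{\psi_0(X)(t-s)}$ by the definition of the L\'evy exponent applied pathwise in $X$ — here one invokes $S_X\subset A$ to ensure finiteness. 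The subtlety is purely one of measurability and of interchanging the conditional expectation with the evaluation at the random parameter $X$; this is handled by a standard monotone-class or regular-conditional-distribution argument exploiting $\mathbb{P}_0$-independence of $X$ and $\{\xi_t\}$. Everything else in the proof is bookkeeping, and the conclusion that $\{\xi_t\}$ is a $\mathbb{P}_X$-L\'evy information process with message $X$ and noise type $\psi_0(\alpha)$ then follows directly from Definition~\ref{def:1}.
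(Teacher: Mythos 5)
Your proposal is correct and follows essentially the same route as the paper: the paper also establishes that $\{\rho^X_t\}$ is a $(\{\mathcal{G}_t\},\mathbb{P}_0)$-martingale, uses it to define $\mathbb{P}_X$, and then argues that conditioning on $\mathcal{F}^X$ reduces the measure change to a classical Esscher transformation with the conditional exponent $\psi_0(\alpha+X)-\psi_0(X)$, so that Definition~\ref{def:1} applies. The extra detail you supply on the martingale verification and the disintegration over $\mathcal{F}^X$ merely makes explicit what the paper leaves as ``one can check'' and ``it should be evident''.
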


In fact, the converse also holds: if we are given a L\'evy information process, then by a change of 
measure we can find a L\'evy process and an independent ``message'' variable. Here follows a 
more precise statement. 

\begin{proposition}
Let $\{\xi_t\}$ be a L\'evy information process on a probability space 
$({\mathit\Omega},{\mathcal F}, {\mathbb P})$ with message 
$X$ and noise type $\psi_0(\alpha)$. Then there exists a change of measure ${\mathbb P}\to
{\mathbb P}_0$ such that $\{\xi_t\}$ and $X$ are ${\mathbb P}_0$-independent, 
$\{\xi_t\}$ is ${\mathbb P}_0$-L\'evy with exponent $\psi_0(\alpha)$, and the probability law of 
$X$ under ${\mathbb P}_0$ 
is the same as probability law of $X$ under ${\mathbb P}$. 
\label{prop:3}
\end{proposition}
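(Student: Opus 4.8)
The plan is to reverse the construction of Proposition~\ref{prop:2} by defining the inverse Esscher change of measure using a martingale built from the information process itself. The key observation is that if $\{\xi_t\}$ is conditionally $K_X$-L\'evy given $\mathcal F^X$, then conditionally the process $\exp(-X\xi_t + (\psi_0(\alpha+X)-\psi_0(X))|_{\alpha\to 0}\cdots)$ — more precisely, since the conditional $\mathbb P$-exponent of $\{\xi_t\}$ given $X$ is $\psi_X(\alpha)=\psi_0(\alpha+X)-\psi_0(X)$, the process
\begin{eqnarray}
\rho_t := \exp\!\left(-X\xi_t + \big(\psi_0(X)-\psi_0(0)\big)t\right) = \exp\!\left(-X\xi_t + \psi_0(X)\,t\right)
\label{eq:invmart}
\end{eqnarray}
should be a $(\{\mathcal G_t\},\mathbb P)$-martingale, where $\mathcal G_t = \sigma[\{\xi_s\}_{0\le s\le t},X]$. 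First I would verify this: conditioning on $\mathcal F^X$ and using the conditional stationary-independent-increments property together with the conditional exponent formula (\ref{eq:zz8}) evaluated at the appropriate imaginary-axis limit (or, more carefully, noting that $-X$ plays the role of an Esscher parameter relative to the conditional law, so $\mathbb E^{\mathbb P}[\exp(-X(\xi_t-\xi_s))\,|\,\mathcal G_s] = \exp(-\psi_X(-X)(t-s)) = \exp(\psi_0(X)(t-s))$, using $\psi_X(-X)=\psi_0(0)-\psi_0(X)=-\psi_0(X)$). This gives $\mathbb E^{\mathbb P}[\rho_t\,|\,\mathcal G_s]=\rho_s$, and $\mathbb E^{\mathbb P}[\rho_t]=1$ follows by taking $s=0$.

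Next I would define $\mathbb P_0$ by $\mathrm d\mathbb P_0/\mathrm d\mathbb P|_{\mathcal G_t} = \rho_t$, checking that this is consistent across $t$ (which is exactly the martingale property) and hence defines a measure on $\bigcup_t\mathcal G_t$; one extends to $\mathcal F$ in the standard way, assuming as usual that $\mathcal F$ is generated by $\{\xi_t\}$ and $X$ (or at least that we only care about this sub-$\sigma$-algebra). Then I would compute, for $\alpha\in\mathds C^{\rm I}$,
\begin{eqnarray}
\mathbb E^{\mathbb P_0}\!\left[\exp(\alpha\xi_t)\,|\,\mathcal F^X\right] = \mathbb E^{\mathbb P}\!\left[\rho_t\exp(\alpha\xi_t)\,|\,\mathcal F^X\right] = \exp\!\big((\psi_X(\alpha-X)+\psi_0(X))t\big) = \exp(\psi_0(\alpha)\,t),
\end{eqnarray}
since $\psi_X(\alpha-X) = \psi_0(\alpha)-\psi_0(X)$. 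This shows the conditional law of $\{\xi_t\}$ given $X$ under $\mathbb P_0$ does not depend on $X$ and has exponent $\psi_0(\alpha)$; combined with the conditional stationary-independent-increments property (which is preserved, since for fixed $X$ the change of measure is an ordinary Esscher transform applied to a conditionally L\'evy process) this simultaneously gives that $\{\xi_t\}$ is $\mathbb P_0$-L\'evy with exponent $\psi_0$ \emph{and} that it is $\mathbb P_0$-independent of $X$ (because its conditional law given $X$ equals its unconditional law). Finally, the law of $X$ is unchanged because $\rho_t$ has conditional $\mathbb P$-expectation $1$ given $\mathcal F^X$, so for any bounded measurable $f$, $\mathbb E^{\mathbb P_0}[f(X)] = \mathbb E^{\mathbb P}[\rho_t f(X)] = \mathbb E^{\mathbb P}[f(X)\,\mathbb E^{\mathbb P}[\rho_t\,|\,\mathcal F^X]] = \mathbb E^{\mathbb P}[f(X)]$.

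The main obstacle I anticipate is purely technical rather than conceptual: justifying that $\rho_t$ is genuinely a martingale (and in particular integrable) requires knowing that $-X$ lands in the admissible domain of the conditional exponent, i.e. that $-X + S_X \subset$ the relevant set — but since $0 \in A$ always and $\psi_X$ is defined on $\mathds C^{\rm I}$ by hypothesis, the evaluation $\psi_X(-X)$ needs the conditional exponent at the real point $-X$, which is $\psi_0(0)-\psi_0(X)$ and is finite precisely because $S_X\subset A$ (the compatibility condition) guarantees $\psi_0(X)$ is finite. One should also address the measure-extension step carefully — the usual route is to work on the canonical path space or to invoke Kolmogorov-type consistency — but this is standard and I would only remark on it. A secondary subtlety is that Definition~\ref{def:1} only specifies the conditional exponent on the imaginary axis, so recovering the conditional law under $\mathbb P_0$ as that of a L\'evy process with exponent $\psi_0$ requires noting that the conditional characteristic function determines the conditional law, which is immediate.
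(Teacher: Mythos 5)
Your proposal is correct and follows essentially the same route as the paper: the same inverse Esscher martingale $\exp(-X\xi_t+\psi_0(X)t)$, verified via the identity $\psi_X(-X)=-\psi_0(X)$, is used as the density for ${\mathbb P}\to{\mathbb P}_0$, and the same computation $\psi_X(\alpha-X)=\psi_0(\alpha)-\psi_0(X)$ drives the conclusion. The only difference is presentational: you deduce independence and the L\'evy property by showing the $\mathcal F^X$-conditional law under ${\mathbb P}_0$ does not depend on $X$, whereas the paper factorises the unconditional joint characteristic functions of $(\xi_t,X)$ and of the increments; these amount to the same argument.
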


\begin{proof}
First we establish that the process $\{{\tilde\rho}^X_t\}$ defined by the expression 
${\tilde\rho}^X_t=\exp(-X\xi_t+\psi_0(X)t)$ is a $(\{{\mathcal G}_t \},{\mathbb P})$-martingale. 
We have
\begin{eqnarray}
{\mathbb E}^{\mathbb P}[{\tilde\rho}^X_t|{\mathcal G}_s] 
&=& {\mathbb E}^{\mathbb P}\left[ \exp(-X\xi_t+\psi_0(X)t) \,|\,{\mathcal G}_s\right] 
\nonumber \\ 
&=& {\mathbb E}^{\mathbb P}[\exp(-X(\xi_t-\xi_s))|{\mathcal G}_s] 
\exp(-X\xi_s+\psi_0(X)t) \nonumber \\ &=& \exp( \psi_X(-X)(t-s)) \exp(-X\xi_s+\psi_0(X)t) 
\end{eqnarray}
by virtue of the fact that $\{\xi_t\}$ is  $\mathcal F^X$-conditionally L\'evy under 
$\mathbb P$. By use of (\ref{eq:zz8}) we deduce that $\psi_X(-X)=-\psi_0(X)$, and hence that 
${\mathbb E}^{\mathbb P}[{\tilde\rho}^X_t|{\mathcal G}_s] =  {\tilde \rho}^X_s$, as required. 
Then we use $\{{\tilde\rho}^X_t\}$ to define a change of measure  $\mathbb P  \rightarrow 
{\mathbb P}_0$ on  $({\mathit\Omega},{\mathcal F}, {\mathbb P})$ by setting
\begin{eqnarray}
\left.\frac{\rd {\mathbb P}_{0}}{\rd{\mathbb P}}\right|_{\mathcal G_t} = {\tilde\rho}^X_t.
\end{eqnarray}
To show that $\xi_t$ and $X$ are ${\mathbb P}_0$-independent for all $t$, it suffices to 
show that their joint characteristic function under ${\mathbb P}_0$ factorises. Letting  
$\alpha, \beta \in \ \mathds C^{\rm I}$, we have 
\begin{eqnarray}
{\mathbb E}^{{\mathbb P}_0}[\exp(\alpha \xi_t+  \beta X)] 
&=& {\mathbb E}^{\mathbb P} 
\left[\exp(-X\xi_t+\psi_0(X)t) \exp(\alpha\xi_t+  \beta X)\right] \nonumber \\ 
&=& 
{\mathbb E}^{\mathbb P}\left[{\mathbb E}^{\mathbb P}[\exp((-X+\alpha)\xi_t+\psi_0(X)t+
\beta X)|{\mathcal F}^X]\right] \nonumber \\ 
&=& {\mathbb E}^{\mathbb P}[ 
\exp(\psi_X(-X+\alpha)t+\psi_0(X)t+  \beta X)] \nonumber \\ 
&=& 
\exp( \psi_0(\alpha)t)\, {\mathbb E}^{{\mathbb P}}[\exp(  \beta X)]  ,
\label{factorization}
\end{eqnarray}
where the last step follows from (\ref{eq:zz8}). This argument can be extended to show that 
$\{\xi_t\}$ and $X$ are ${\mathbb P}_0$-independent. Next we observe that 
\begin{eqnarray}
{\mathbb E}^{{\mathbb P}_0}[\exp(\alpha ( \xi_u - \xi_t )+  \beta \xi_t)] 
&& \nonumber \\ && \hspace{-3.0cm} = \, {\mathbb E}^{\mathbb P} 
\left[\,\exp(-X\xi_u+\psi_0(X)u +  \alpha ( \xi_u - \xi_t )+  \beta \xi_t )\,\right] \nonumber \\ 
&& \hspace{-3.0cm} = \, 
{\mathbb E}^{\mathbb P}\left[{\mathbb E}^{\mathbb P}[\exp(-X\xi_u+\psi_0(X)u +  \alpha 
( \xi_u - \xi_t )+  \beta \xi_t )\,|{\mathcal F}^X]\right] \nonumber \\ 
&& \hspace{-3.0cm} =  \,
{\mathbb E}^{\mathbb P}\left[{\mathbb E}^{\mathbb P}[\exp(\psi_0(X)u +  (\alpha - X) 
(\xi_u - \xi_t )+  (\beta - X) \xi_t )\,|{\mathcal F}^X]\right] \nonumber \\ 
&& \hspace{-3.0cm} = \,  {\mathbb E}^{\mathbb P}[ 
\exp( \psi_0(X)u + \psi_X(\alpha - X)(u - t)+  \psi_X(\beta - X)t ) \, ] \nonumber \\ 
&& \hspace{-3.0cm} = \, \exp( \psi_0(\alpha)(u-t))\, \exp( \psi_0(\beta)t) 
\label{ind incr}
\end{eqnarray}
for $u \geq t \geq 0$, and it follows that $\xi_u - \xi_t $ and  $\xi_t $ are independent. This
argument  can be extended to show that $\{\xi_t\}$ has ${\mathbb P}_0$-independent 
increments. Finally, if we set $\alpha = 0$ in (\ref{factorization}) it follows that the probability 
laws of $X$ under ${\mathbb P}_0$ and ${\mathbb P}$ are identical; if we set $\beta=0$ in 
(\ref{factorization}) it follows that the ${\mathbb P}_0$ exponent of $\{\xi_t\}$ is 
$\psi_0(\alpha)$; and if we set $\beta = 0$ in (\ref{ind incr}) it follows that $\{\xi_t\}$ is 
${\mathbb P}_0$-stationary. 
 \qed 
\end{proof}

Going forward, we adopt the convention that  ${\mathbb P}$ always denotes  the ``physical'' 
measure in relation to which an information process with message $X$ is defined, and that 
${\mathbb P_0}$ denotes the  transformed measure with respect to which the information 
process and the message decouple. Therefore, henceforth we write ${\mathbb P}$ rather 
than ${\mathbb P}_X$. In addition to establishing the existence of L\'evy information 
processes, the results of Proposition~\ref{prop:3} provide useful tools for calculations, allowing 
us to work out properties of information processes by referring the calculations back to 
${\mathbb P}_0$. We consider as an example the problem of working out the 
${\mathcal F}_t$-conditional expectation under ${\mathbb P}$ of a 
${\mathcal G}_t$-measurable integrable random variable $Z$. The ${\mathbb P}$-expectation 
of $Z$ can be written in terms of ${\mathbb P}_{0}$-expectations, and is given by a ``generalised 
Bayes formula'' (Kallianpur \& Striebel 1968) of the form
\begin{eqnarray}
{\mathbb E}^{{\mathbb P}} \! \left[ Z \, |\, {\mathcal F}_t\right] = 
\frac{{\mathbb E}^{\mathbb  P_{0}}\! \left[ \rho^X_t Z \, |\,  {\mathcal F}_t\right]}
{{\mathbb E}^{\mathbb P_{0}}\! \left[ \rho^X_t \, |\,  {\mathcal F}_t\right]} . 
\label{eq:zz14}
\end{eqnarray}
This formula can be used to obtain the ${\mathcal F}_t$-conditional probability distribution 
function  for $X$, defined for 
$y\in{\mathds R}$ by 
\begin{eqnarray}
F^X_t (y) = \mathbb P (X\leq y \,|\,{\mathcal F}_t). 
\end{eqnarray}
In the Bayes formula we set  $Z = {\mathds 1}\{X\leq y\}$, and the result is
\begin{eqnarray}
F^X_t(y) = 
\frac{\int {\mathds 1}\{x\leq y\} 
\exp \left({x\xi_t - \psi_0(x) t}\right) \, \rd F^X(x)}
{\int \exp  \left({x\xi_t - \psi_0(x) t}\right) \, \rd F^X(x)} ,
\label{eq:zz20}
\end{eqnarray}
where $F^X(y)= {\mathbb P}(X<y)$ is the \textit{a priori} distribution function. It is useful for 
some purposes to work directly with the conditional probability measure $\pi_t(\rd x)$ induced 
on ${\mathds R}$ defined by $\rd F^X_t(x)=   \pi_t(\rd x)$. In particular, when $X$ is a 
continuous random variable with a density function $p(x)$ one can write $\pi _t(\rd x) = p_t(x)
\rd x$, where $p_t(x)$ is the conditional density function. 

\begin{proposition} 
Let $\{\xi_t\}$ be a L\'evy information process under ${\mathbb P}$ with noise type 
$\psi_0(\alpha)$, and let the {\rm a priori} distribution of the associated message $X$ be
$ \pi (dx)$. Then the ${\mathcal F}_t$-conditional {\rm a posteriori} distribution of $X$ 
is  
\begin{eqnarray}
\pi_t( \rd x) =  \frac{ \, \exp \left ({x\xi_t - \psi_0(x) t}\right)  }
{\int  \exp \left ({x\xi_t - \psi_0(x) t}\right) \pi (\rd x) } \, \pi (\rd x) . 
\label{eq:zz18}
\end{eqnarray}
\label{prop:4}
\end{proposition}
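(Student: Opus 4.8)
The plan is to derive \eqref{eq:zz18} directly from the generalised Bayes formula \eqref{eq:zz14} together with the explicit form \eqref{eq:12} of the Radon--Nikodym density $\rho^X_t = \exp(X\xi_t - \psi_0(X)t)$. The key observation is that under the decoupling measure ${\mathbb P}_0$ (whose existence is guaranteed by Proposition~\ref{prop:3}), the message $X$ is independent of ${\mathcal F}_t = \sigma[\{\xi_s\}_{0\le s\le t}]$, and the ${\mathbb P}_0$-law of $X$ coincides with its ${\mathbb P}$-law, namely $\pi(\rd x)$.

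First I would take an arbitrary bounded measurable test function $f$ and compute ${\mathbb E}^{\mathbb P}[f(X)\,|\,{\mathcal F}_t]$ using \eqref{eq:zz14} with $Z = f(X)$. The numerator is ${\mathbb E}^{{\mathbb P}_0}[\rho^X_t f(X)\,|\,{\mathcal F}_t] = {\mathbb E}^{{\mathbb P}_0}[\exp(X\xi_t - \psi_0(X)t)\,f(X)\,|\,{\mathcal F}_t]$; since $\xi_t$ is ${\mathcal F}_t$-measurable and $X$ is ${\mathbb P}_0$-independent of ${\mathcal F}_t$, the tower/substitution property lets me freeze $\xi_t$ and integrate over the ${\mathbb P}_0$-law of $X$, giving $\int \exp(x\xi_t - \psi_0(x)t)\,f(x)\,\pi(\rd x)$. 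The denominator is the same expression with $f\equiv 1$, i.e.\ $\int \exp(x\xi_t - \psi_0(x)t)\,\pi(\rd x)$; this is finite and strictly positive ${\mathbb P}$-a.s.\ because $\rho^X_t$ is a genuine density (integrates to one) and is strictly positive. Dividing, ${\mathbb E}^{\mathbb P}[f(X)\,|\,{\mathcal F}_t] = \int f(x)\,\pi_t(\rd x)$ with $\pi_t$ as in \eqref{eq:zz18}. Since this holds for all bounded measurable $f$, $\pi_t$ is the ${\mathcal F}_t$-conditional distribution of $X$, which is the claim. Specialising $f = {\mathds 1}\{x\le y\}$ recovers \eqref{eq:zz20}, so the proposition is just the measure-valued restatement of what the generalised Bayes formula already yields.

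The only point requiring a little care --- and the natural candidate for the ``main obstacle'' --- is the justification of pulling the ${\mathcal F}_t$-conditional expectation inside the integral against $\pi(\rd x)$, i.e.\ the use of independence to write ${\mathbb E}^{{\mathbb P}_0}[g(X,\xi_t)\,|\,{\mathcal F}_t] = \int g(x,\xi_t)\,\pi(\rd x)$. This is a standard fact (the ``freezing lemma'' for conditional expectations given independence), but it needs the integrability of $g(X,\xi_t) = \exp(X\xi_t - \psi_0(X)t)f(X)$ under ${\mathbb P}_0$, which for bounded $f$ follows from ${\mathbb E}^{{\mathbb P}_0}[\rho^X_t] = 1$. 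One should also note that \eqref{eq:zz14} itself is only asserted for integrable $Z$; taking $f$ bounded keeps $Z=f(X)$ trivially integrable, and a monotone-class or dominated-convergence argument then extends the identity of measures to all of ${\mathcal B}({\mathds R})$, which is all that is needed to identify $\pi_t$ as the regular conditional distribution. No further delicacy arises, so the proof is short.
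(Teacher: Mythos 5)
Your proposal is correct and follows essentially the same route as the paper: the paper also derives Proposition~\ref{prop:4} from the generalised Bayes formula (\ref{eq:zz14}), using the decoupling measure of Proposition~\ref{prop:3} under which $X$ is independent of ${\mathcal F}_t$ with unchanged law, evaluating the conditional expectations with the density $\rho^X_t=\exp(X\xi_t-\psi_0(X)t)$ frozen at $\xi_t$, the only cosmetic difference being that the paper works with the indicator test functions $Z={\mathds 1}\{X\leq y\}$ to get (\ref{eq:zz20}) and then restates it in measure form, whereas you use general bounded $f$. Your added remarks on integrability and the monotone-class extension are fine but not needed beyond what the paper implicitly assumes.
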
 

It is straightforward to establish by use of a variational argument that for any function 
$f: \, {\mathds R}\to{\mathds R}$ such that the random variable $Y=f(X)$ is integrable, 
the best estimate for $Y$ conditional on the information ${\mathcal F}_t$ is given by 
\begin{eqnarray}
{\hat Y}_t :={\mathbb E}^{{\mathbb P}} [Y \,|\, {\mathcal F}_t] = \int f(x) \, \pi_t( \rd x) . 
\label{best estimate}
\end{eqnarray}
By the  ``best estimate'' for $Y$ we mean the ${\mathcal F}_t$-measurable random variable 
${\hat Y}_t$ that minimises the quadratic error ${\mathbb E}^{{\mathbb P}}[(Y-{\hat Y}_t)^2
|{\mathcal F}_t]$. 

It will be observed that at any given time $t$ the best estimate can be 
expressed as a function of $\xi_t$ and $t$, and does not involve values of the information 
process at times earlier than $t$. That this should be the case can be seen as a consequence 
of the following: 

\begin{proposition} 
The L\'evy information process $\{\xi_t\}$ has the Markov property.  
\label{prop:5}
\end{proposition}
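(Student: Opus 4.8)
The plan is to show that for any $0 \leq t_1 < t_2 < \cdots < t_n < t$ and any bounded measurable test function $g$, the conditional expectation $\mathbb{E}^{\mathbb{P}}[g(\xi_t) \,|\, \xi_{t_1}, \ldots, \xi_{t_n}]$ depends only on $\xi_{t_n}$ (and on $t_n$, $t$). Equivalently, and more conveniently, I would work with the $\sigma$-algebra ${\mathcal F}_{t_n}$ generated by the whole path up to time $t_n$ and show $\mathbb{E}^{\mathbb{P}}[g(\xi_t) \,|\, {\mathcal F}_{t_n}] = \mathbb{E}^{\mathbb{P}}[g(\xi_t) \,|\, \xi_{t_n}]$; by a monotone class argument it suffices to treat $g(\xi_t) = \exp(\alpha \xi_t)$ for $\alpha \in \mathds{C}^{\rm I}$, since such exponentials are measure-determining.

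The key device is the generalised Bayes formula (\ref{eq:zz14}) together with Proposition~\ref{prop:3}, which lets me refer every $\mathbb{P}$-computation back to the decoupling measure $\mathbb{P}_0$ under which $\{\xi_t\}$ is a genuine L\'evy process independent of $X$. First I would write, using (\ref{eq:zz14}) with $Z = \exp(\alpha \xi_t)$ and with the Radon--Nikodym density $\rho^X_t = \exp(X \xi_t - \psi_0(X) t)$,
\begin{eqnarray}
\mathbb{E}^{\mathbb{P}}[\re^{\alpha \xi_t} \,|\, {\mathcal F}_{t_n}]
= \frac{\mathbb{E}^{\mathbb{P}_0}[\,\re^{(\alpha + X)\xi_t - \psi_0(X)t} \,|\, {\mathcal F}_{t_n}]}
{\mathbb{E}^{\mathbb{P}_0}[\,\re^{X\xi_t - \psi_0(X)t} \,|\, {\mathcal F}_{t_n}]} . \nonumber
\end{eqnarray}
Then I would split $\xi_t = \xi_{t_n} + (\xi_t - \xi_{t_n})$ in both numerator and denominator. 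Since $X$ is ${\mathcal F}_{t_n}$-measurable-in-the-conditioning-sense only through $\rho$, but more to the point, under $\mathbb{P}_0$ the increment $\xi_t - \xi_{t_n}$ is independent of ${\mathcal F}_{t_n}$ and of $X$, the conditional $\mathbb{P}_0$-expectation of $\exp((\alpha + X)(\xi_t - \xi_{t_n}))$ given ${\mathcal F}_{t_n}$ can be computed by first conditioning on $X$: it equals $\exp(\psi_0(\alpha + X)(t - t_n))$, which is a function of $X$ alone. The factors $\exp((\alpha + X)\xi_{t_n} - \psi_0(X)t)$ are ${\mathcal F}_{t_n}$-measurable and pull out. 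After cancelling the common terms, the ratio collapses to an expression in which the only path-dependence that survives is through $\xi_{t_n}$ itself --- explicitly, to $\int \re^{(\alpha+x)\xi_{t_n} - \psi_0(x)t + \psi_0(\alpha+x)(t-t_n)}\pi(\rd x) \big/ \int \re^{x\xi_{t_n} - \psi_0(x)t + \psi_0(x)(t-t_n)}\pi(\rd x)$, where $\pi$ is the a priori law of $X$. This is manifestly $\sigma[\xi_{t_n}]$-measurable, and since the same computation with ${\mathcal F}_{t_n}$ replaced by $\sigma[\xi_{t_n}]$ gives the identical answer, the Markov property follows.

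The main obstacle, and the step I would be most careful about, is the independence bookkeeping under $\mathbb{P}_0$: one needs that $\xi_t - \xi_{t_n}$ is independent of the pair $({\mathcal F}_{t_n}, X)$ jointly, not merely of each separately. This is exactly what the proof of Proposition~\ref{prop:3} delivers (it shows $\{\xi_s\}$ has $\mathbb{P}_0$-independent increments \emph{and} is $\mathbb{P}_0$-independent of $X$, and in fact the argument there extends to joint independence of the increment from ${\mathcal G}_{t_n} = \sigma[\{\xi_s\}_{s \leq t_n}, X]$), so I would invoke it rather than re-prove it. A minor secondary point is the passage from exponential test functions with purely imaginary $\alpha$ back to general bounded measurable $g$, which is a routine Fourier/monotone-class argument, and the remark that it suffices to condition on finitely many past values $\xi_{t_1}, \ldots, \xi_{t_n}$ rather than on all of ${\mathcal F}_{t_n}$ --- but since I establish the stronger statement with ${\mathcal F}_{t_n}$, the weaker one is immediate.
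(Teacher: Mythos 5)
Your proof is correct and rests on the same device as the paper's: the Kallianpur--Striebel Bayes formula referred back to the decoupling measure ${\mathbb P}_0$ of Proposition~\ref{prop:3}, under which $\{\xi_t\}$ has independent increments and is independent of $X$, so that the conditional expectation collapses to a function of $\xi_{t_n}$ alone. The only difference is one of execution rather than of route: the paper projects the density onto ${\mathcal F}_t$ to obtain $\Phi_t$ and then cites the ${\mathbb P}_0$-Markov property of the L\'evy process, whereas you integrate out the increment explicitly against exponential test functions (your resulting ratio agrees with what (\ref{eq:zz18}) gives), which is a legitimate, slightly more computational variant.
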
 
\begin{proof}
For the Markov property it suffices to establish that for $a \in{\mathds R}$ we have
\begin{eqnarray}
{\mathbb P}\left( \xi_t\leq a \, | \, {\mathcal F}_s \right) = {\mathbb P}\left( \xi_t\leq a\, | 
\, {\mathcal F}^{\xi_s}  \right), 
\label{Markov property}
\end{eqnarray}
where 
${\mathcal F}_t=\sigma[\,\{\xi_s\}_{0 \leq s \leq t}\,]$ 
and 
${\mathcal F}^{\xi_t} =\sigma[\,\xi_t\,]$. 
We write
\begin{eqnarray}
\Phi_t := {\mathbb E}^{{\mathbb P}_{0}} \! 
\left[ \rho^X_t | {\mathcal F}_t\right] 
= \int   \exp \left ({x\xi_t - \psi_0(x) t}\right) \,  \pi( \rd x), 
\label{eq:9}
\end{eqnarray}
where $\rho^X_t$ is defined as in equation (\ref{eq:12}). It follows that 
\begin{eqnarray}
&& \hspace{-1.0cm} {\mathbb P}\left( \xi_t\leq a \,|\, {\mathcal F}_s \right) =
{\mathbb E}^{\mathbb P}[{\mathds 1}\{\xi_t\leq a\}\,|\, {\mathcal F}_s]
= \frac{{\mathbb E}^{{\mathbb P}_{0}}[\Phi_t{\mathds 1}\{\xi_t\leq a\}
\,|\, {\mathcal F}_s]}{{\mathbb E}^{{\mathbb P}_{0}}[\Phi_t \,|\, {\mathcal F}_s]} 
\nonumber \\ && \hspace{-0.3cm}  =  
\frac{{\mathbb E}^{{\mathbb P}_{0}}[\Phi_t{\mathds 1}\{\xi_t\leq a\}
\,|\, {\mathcal F}^{\xi_s} ]}{{\mathbb E}^{{\mathbb P}_{0}}[\Phi_t \,|\, {\mathcal F}^{\xi_s} ]} 
= {\mathbb E}^{\mathbb P}[{\mathds 1}\{\xi_t\leq a\} \,|\, {\mathcal F}^{\xi_s}  ] 
= {\mathbb P}\left( \xi_t\leq a\, | \, {\mathcal F}^{\xi_s}  \right),
\end{eqnarray}
since $\{\xi_t\}$ has the Markov property under the transformed measure ${\mathbb P}_0$.  
 \qed 
\end{proof}

We note that since $X$ is ${\mathcal F}_\infty$-measurable, which follows from 
Proposition~\ref{prop:1}, the Markov property implies that if 
$Y=f(X)$ is integrable we have 
\begin{eqnarray}
{\mathbb E}^{\mathbb P} [Y|{\mathcal F}_t] = {\mathbb E}^{\mathbb P}
[Y | \,{\mathcal F}^{\xi_t} ].
\label{Markov consequence}
\end{eqnarray}
This identity allows one to work out the optimal filter for a L\'evy information process 
by direct use of the Bayes formula. It should be apparent that simulation of the dynamics of the 
filter is readily approachable on account of this property.

We remark briefly on what might appropriately be called a ``time consistency'' property satisfied 
by L\'evy information processes. It follows  from (\ref{eq:zz18}) that, given the conditional 
distribution $\pi_s( \rd x)$ at time $s\leq t$, we can express $\pi_t( \rd x)$ in the form  
\begin{eqnarray}
\pi_t( \rd x) = \frac{  \exp \big( x(\xi_t-\xi_s) - \psi_0(x)(t-s)\big)}
{\int   \exp \big( x(\xi_t-\xi_s) - \psi_0(x)(t-s)\big)\pi_s( \rd x)} \, \pi_s(\rd x) .
\label{eq:ww1}
\end{eqnarray}
Then if for fixed $s \geq 0$ we introduce a new time variable $u := t - s$, and define 
$\eta_u =\xi_{u + s} - \xi_{ s}$, we find that $\{ \eta_u \}_{u \geq 0}$ is an information process 
with fiducial exponent $\psi_0(\alpha)$ and message $X$ with \textit {a priori} distribution 
$\pi_s( \rd x)$. Thus given up-to-date information we can ``re-start'' the information process 
at that time to produce a new information process of the same type, with an adjusted 
message distribution. 

Further insight into the nature of L\'evy information can be gained by examination 
of expression (\ref{eq:zz8}) for the conditional exponent of an information process. In 
particular, as a consequence of the L\'evy-Khintchine representation (\ref{eq:2}) we are able to deduce 
that  
\begin{eqnarray}
\psi_0(\alpha + X) - \psi_0(X)\!\! &=& \!\! \left( p + qX +  \int_{{\mathds R} \backslash \{0\}}
\!\!\!\!\! z (\re^{Xz} -1){\mathds 1}\{|z|<1\}) \nu(\rd z) \right) \alpha 
+ \half q \alpha^2 \nonumber \\ && + \int_{{\mathds R} \backslash \{0\}} \!\!\!\!\!  
(\re^{\alpha z} -1 -\alpha z {\mathds 1}\{|z|<1\}) \re^{Xz} \nu(\rd z),
\label{eq:xx16}
\end{eqnarray}
for $\alpha \in \mathds C^{\rm I}$, which leads to the following: 

\begin{proposition}
The randomisation of the ${\mathbb P}_0$-L\'evy process $\{\xi_t\}$ achieved through the 
change of measure generated by the randomised Esscher martingale 
$\rho_t=\exp(X \xi_t - \psi_0(X)t)$ induces two effects on the characteristics of the process: 
{\rm (i)} a random shift in the drift term, given by
\begin{eqnarray}
p \to p + qX +  \int_{{\mathds R} \backslash \{0\}}
z (\re^{Xz} -1){\mathds 1}\{|z|<1\}) \nu(\rd z)  , 
\label{eq:xx16.1}
\end{eqnarray}
and {\rm (ii)} a random rescaling of the L\'evy measure, given by $\nu(\rd z) \to \re^{Xz} 
\nu(\rd z)$.
\label{prop:6}
\end{proposition}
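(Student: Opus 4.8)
The plan is to read off the two effects directly from the expression already obtained in~(\ref{eq:xx16}) for the conditional L\'evy exponent $\psi_0(\alpha+X)-\psi_0(X)$, by matching it term-by-term against the L\'evy--Khintchine form~(\ref{eq:2}). Recall that under ${\mathbb P}$ the process $\{\xi_t\}$ is, conditionally on ${\mathcal F}^X$, L\'evy with exponent $\psi_X(\alpha)=\psi_0(\alpha+X)-\psi_0(X)$ for $\alpha\in{\mathds C}^{\rm I}$; so the whole content of the proposition is the observation that the right-hand side of~(\ref{eq:xx16}) is itself of L\'evy--Khintchine type, with an identifiable triplet $K_X=(p_X,q_X,\nu_X)$.

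First I would substitute the L\'evy--Khintchine representation~(\ref{eq:2}) for $\psi_0$ into both $\psi_0(\alpha+X)$ and $\psi_0(X)$ and expand. The quadratic term contributes $\tfrac12 q(\alpha+X)^2-\tfrac12 qX^2 = qX\alpha+\tfrac12 q\alpha^2$; the linear term contributes $p\alpha$; and the integral term contributes $\int_{{\mathds R}\backslash\{0\}}\bigl(\re^{(\alpha+X)z}-\re^{Xz}-\alpha z{\mathds 1}\{|z|<1\}\bigr)\nu(\rd z)$. One then rewrites the integrand as $\re^{Xz}\bigl(\re^{\alpha z}-1-\alpha z{\mathds 1}\{|z|<1\}\bigr)+\alpha z{\mathds 1}\{|z|<1\}(\re^{Xz}-1)$, which separates a genuine L\'evy--Khintchine integral against the new measure $\re^{Xz}\nu(\rd z)$ from an extra term linear in $\alpha$. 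Collecting the coefficients of $\alpha$ gives the shifted drift~(\ref{eq:xx16.1}), the coefficient of $\alpha^2$ is unchanged at $\tfrac12 q$, and the residual integral is exactly $\int(\re^{\alpha z}-1-\alpha z{\mathds 1}\{|z|<1\})\nu_X(\rd z)$ with $\nu_X(\rd z)=\re^{Xz}\nu(\rd z)$. This is precisely the rearrangement already displayed in~(\ref{eq:xx16}), so the proof amounts to citing that identity and reading off the triplet.

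The one point that genuinely needs to be checked — and which I would treat as the main (minor) obstacle — is that $\nu_X(\rd z)=\re^{Xz}\nu(\rd z)$ is a legitimate L\'evy measure and that the shifted-drift integral converges, i.e.\ that $\int_{{\mathds R}\backslash\{0\}}(1\wedge z^2)\re^{Xz}\nu(\rd z)<\infty$ and $\int_{|z|<1}|z|\,|\re^{Xz}-1|\,\nu(\rd z)<\infty$. Near $z=0$ there is nothing new, since $\re^{Xz}\to1$ and $|z|\,|\re^{Xz}-1|=O(z^2)$, so the small-jump integrability is inherited from $\nu$. For large $|z|$ one uses the exponential-moment bound already recorded in the excerpt, namely $\int_{|z|\geq1}\re^{\alpha z}\nu(\rd z)<\infty$ for $\alpha\in A$, together with the compatibility hypothesis $S_X\subset A$: for each fixed value of $X$ (a point of $A$) the tail integral is finite, so $\nu_X$ satisfies the required integrability condition ${\mathbb P}$-almost surely. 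Since the square-integrability assumption on $\{\xi_t\}$ forces $\psi_0'(X)$ and $\psi_0''(X)$ to be finite a.s., the shifted drift in~(\ref{eq:xx16.1}) is a well-defined (a.s.\ finite) random variable as well.

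Having verified these integrability points, the proof concludes: $\psi_0(\alpha+X)-\psi_0(X)$ has the L\'evy--Khintchine form with characteristic triplet obtained from $K_0=(p,q,\nu)$ by the drift shift~(\ref{eq:xx16.1}) and the measure rescaling $\nu(\rd z)\to\re^{Xz}\nu(\rd z)$, while the Gaussian coefficient $q$ is untouched. By Definition~\ref{def:1} this triplet is exactly the conditional characteristic $K_X$ of the information process given ${\mathcal F}^X$, which establishes both claimed effects. \qed
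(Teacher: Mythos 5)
Your proposal is correct and follows essentially the same route as the paper: the paper obtains Proposition~\ref{prop:6} by substituting the L\'evy--Khintchine representation (\ref{eq:2}) into $\psi_0(\alpha+X)-\psi_0(X)$ and rearranging into the form (\ref{eq:xx16}), from which the shifted drift and the tilted measure $\re^{Xz}\nu(\rd z)$ are read off directly. Your additional integrability checks (small jumps via the second-order vanishing of $z(\re^{Xz}-1)$, large jumps via the exponential-moment condition and $S_X\subset A$) only elaborate on the point the paper disposes of in a single remark after the proposition, so there is no substantive difference in approach.
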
 

The integral appearing in  the shift in the drift term is well 
defined since the term $z (\re^{Xz} -1)$ vanishes to second order at the origin. It follows from 
Proposition~\ref{prop:6} that in sampling an information process an agent is in 
effect trying to detect a random shift in the drift term, and a random ``tilt'' and 
change of scale in the L\'evy measure, altering the overall rate as well as the relative rates 
at which jumps of various sizes occur. It is from these data, within which the message is 
encoded,  that the agent attempts to estimate the value of $X$. It is interesting to note that 
randomised Esscher martingales arise in the construction of pricing kernels in the theory of 
finance (see, e.g., Shefrin 2008, Macrina \& Parbhoo 2011). 

We turn to examine the properties of certain martingales associated with L\'evy information. 
We establish the existence of a so-called innovations representation for 
L\'evy information. In the case of the Brownian filter the ideas involved are rather well 
understood (see, e.g., Liptser \& Shiryaev 2000), and the matter has also been investigated 
in the case of Poisson information (Segall \& Kailath 1975). These examples 
arise as special cases in the general theory of L\'evy information. Throughout 
the discussion that follows we fix a probability space 
$({\mathit\Omega},{\mathcal F},{\mathbb P})$.

\begin{proposition}
Let $\{\xi_t\}$ be a L\'evy information process with fiducial exponent $\psi_0(\alpha)$
and message $X$, let 
$\{{\mathcal F}_t\}$ denote the filtration generated by $\{\xi_t\}$, let $Y = \psi'_0(X)$, 
where $\psi_0'(\alpha)$ is the marginal fiducial exponent, and set 
$ \hat Y_t= {\mathbb E}^{\mathbb P}\left[Y|{\mathcal F}_t \right] $. 
Then the process $\{M_t\}$ defined by 
\begin{eqnarray}
 \xi_t =  \int_0^t \hat Y_u \,  \rd u + M_t
\label{eq:59}
\end{eqnarray}
is an $(\{{\mathcal F}_t \},{\mathbb P})$-martingale.
\label{prop:7}
\end{proposition}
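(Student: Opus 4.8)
The plan is to show that $\{M_t\}$ has the defining properties of a martingale with respect to $(\{\mathcal F_t\},\mathbb P)$: integrability and the tower property ${\mathbb E}^{\mathbb P}[M_t \,|\, {\mathcal F}_s] = M_s$ for $s \leq t$. Integrability is immediate from the square-integrability assumption on $\{\xi_t\}$ (which we have assumed throughout) together with the fact that $\hat Y_u = {\mathbb E}^{\mathbb P}[\psi_0'(X)\,|\,{\mathcal F}_u]$ is integrable, since $\psi_0'(X)$ is square integrable by the standing hypothesis; the integrand $\hat Y_u$ is itself a martingale, and one checks that $\int_0^t |\hat Y_u|\,\rd u$ has finite expectation by Fubini. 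The substance of the proof lies in computing the conditional increment ${\mathbb E}^{\mathbb P}[\xi_t - \xi_s \,|\, {\mathcal F}_s]$ and matching it against ${\mathbb E}^{\mathbb P}\!\big[\int_s^t \hat Y_u\,\rd u \,\big|\, {\mathcal F}_s\big]$.

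First I would condition on ${\mathcal F}^X$ inside an ${\mathcal F}_s$-conditional expectation, using the tower property with ${\mathcal G}_s = \sigma[\{\xi_u\}_{0\le u\le s},X] \supseteq {\mathcal F}_s$. Since $\{\xi_t\}$ is conditionally $K_X$-L\'evy given ${\mathcal F}^X$, the increment $\xi_t-\xi_s$ is, conditionally on ${\mathcal G}_s$, distributed as $\xi_{t-s}$ under $K_X$, so that ${\mathbb E}^{\mathbb P}[\xi_t - \xi_s \,|\, {\mathcal G}_s] = \psi_X'(0)(t-s) = \psi_0'(X)(t-s)$, exactly as in equation (\ref{eq:22}). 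Taking a further ${\mathcal F}_s$-conditional expectation and using the tower property gives ${\mathbb E}^{\mathbb P}[\xi_t - \xi_s \,|\, {\mathcal F}_s] = (t-s)\,{\mathbb E}^{\mathbb P}[\psi_0'(X)\,|\,{\mathcal F}_s] = (t-s)\,\hat Y_s$.

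On the other side, I would apply the conditional Fubini theorem to write ${\mathbb E}^{\mathbb P}\!\big[\int_s^t \hat Y_u\,\rd u \,\big|\, {\mathcal F}_s\big] = \int_s^t {\mathbb E}^{\mathbb P}[\hat Y_u \,|\, {\mathcal F}_s]\,\rd u$, and then observe that $\{\hat Y_u\}$ is itself an $(\{\mathcal F_u\},\mathbb P)$-martingale (being a conditional expectation of the fixed integrable random variable $Y$), so that ${\mathbb E}^{\mathbb P}[\hat Y_u \,|\, {\mathcal F}_s] = \hat Y_s$ for $u \geq s$. Hence ${\mathbb E}^{\mathbb P}\!\big[\int_s^t \hat Y_u\,\rd u \,\big|\, {\mathcal F}_s\big] = (t-s)\,\hat Y_s$, which matches the increment computed above. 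Subtracting, ${\mathbb E}^{\mathbb P}[M_t - M_s\,|\,{\mathcal F}_s] = {\mathbb E}^{\mathbb P}[\xi_t-\xi_s\,|\,{\mathcal F}_s] - (t-s)\hat Y_s = 0$, which is the claim.

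I expect the main obstacle to be bookkeeping around the conditional Fubini step and the integrability justifications rather than any deep difficulty: one must check that $(u,\omega)\mapsto \hat Y_u(\omega)$ is jointly measurable and that ${\mathbb E}^{\mathbb P}\int_s^t |\hat Y_u|\,\rd u < \infty$ so that the interchange of integral and conditional expectation is legitimate. Joint measurability follows from choosing a c\`adl\`ag (or at least progressively measurable) version of the martingale $\{\hat Y_u\}$, and the integrability bound follows from ${\mathbb E}^{\mathbb P}|\hat Y_u| \le {\mathbb E}^{\mathbb P}|Y| < \infty$ uniformly in $u$. A secondary point requiring a line of care is the passage from ${\mathcal F}^X$-conditional L\'evy structure to the ${\mathcal G}_s$-conditional increment law; this uses that, given ${\mathcal F}^X$, the increments of $\{\xi_t\}$ after time $s$ are independent of ${\mathcal F}_s$, so conditioning further on $\{\xi_u\}_{u\le s}$ does not change the law of $\xi_t-\xi_s$.
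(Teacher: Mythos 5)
Your proposal is correct, and its overall skeleton matches the paper's: both arguments reduce to showing ${\mathbb E}^{\mathbb P}[\xi_t-\xi_s\,|\,{\mathcal F}_s]=(t-s)\hat Y_s$ and matching this against ${\mathbb E}^{\mathbb P}[\int_s^t \hat Y_u\,\rd u\,|\,{\mathcal F}_s]$ via the conditional Fubini theorem and the fact that $\{\hat Y_t\}$ is itself an $(\{{\mathcal F}_t\},{\mathbb P})$-martingale. Where you genuinely differ is in how the key intermediate fact is obtained. The paper first proves that $m_t=\xi_t-\psi_0'(X)t$ is a $(\{{\mathcal G}_t\},{\mathbb P})$-martingale by considering the one-parameter family of randomised Esscher martingales $\rho_t^{X+\epsilon}=\exp((X+\epsilon)\xi_t-\psi_0(X+\epsilon)t)$ under ${\mathbb P}_0$, expanding to first order in $\epsilon$ to get that $\rho^X_t(\xi_t-\psi_0'(X)t)$ is a $(\{{\mathcal G}_t\},{\mathbb P}_0)$-martingale, and then changing measure back to ${\mathbb P}$ with $\{\rho^X_t\}$; it then projects $m_t$ onto ${\mathcal F}_t$. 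You instead extract the same statement, in the equivalent form ${\mathbb E}^{\mathbb P}[\xi_t-\xi_s\,|\,{\mathcal G}_s]=\psi_0'(X)(t-s)$, directly from Definition~\ref{def:1}: conditional stationarity and the conditional independence of post-$s$ increments from ${\mathcal F}_s$ given ${\mathcal F}^X$, combined with the first-moment identity (\ref{eq:22}) already established in the proof of Proposition~\ref{prop:1}. Your route is more elementary and stays entirely under the physical measure ${\mathbb P}$, which is a genuine simplification; the paper's route is in the spirit of its measure-change machinery (Proposition~\ref{prop:3}) and exhibits the $(\{{\mathcal G}_t\},{\mathbb P}_0)$-martingale $\rho^X_t(\xi_t-\psi_0'(X)t)$ as the first member of a Sheffer-type family, which is of independent interest. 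Your attention to the technical points (integrability of $M_t$, uniform bound ${\mathbb E}|\hat Y_u|\leq{\mathbb E}|Y|$, progressive measurability of a version of $\{\hat Y_u\}$ for the Fubini step) is appropriate and goes slightly beyond what the paper spells out.
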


\begin{proof}
We recall that $\{\xi_t\}$ is by definition ${\mathcal F}^X$-conditionally ${\mathbb P}$-L\'evy. 
It follows therefore from (\ref{eq:22}) that ${\mathbb E}^{\mathbb P}[\xi_t|X]=Yt$, where 
$Y = \psi'_0(X)$. As before we let $\{{\mathcal G}_t \}$ denote the filtration generated jointly 
by $\{\xi_t\}$ and $X$. First we observe that the process defined for $t \geq 0$ by $m_t = 
\xi_t-Yt$ is a $(\{{\mathcal G}_t\} ,{\mathbb P})$-martingale. This assertion can be checked 
by consideration of  the one-parameter family of 
$(\{{\mathcal G}_t\},{\mathbb P}_0)$-martingales defined by
\begin{eqnarray}
\rho^{X+\epsilon}_t = \exp \big({(X + \varepsilon) \xi_t - \psi_0(X + \varepsilon) t}\big)
\end{eqnarray}
for $\epsilon \in \mathds C^{\rm I}$. Expanding this expression to first order in $\epsilon$,
we deduce that the process defined for $t \geq 0$ by
$ \rho^X_t (\xi_t-\psi_0'(X)t)$
is a $( \{ {\mathcal G}_t \},{\mathbb P}_0)$-martingale. Thus we have
\begin{eqnarray}
{\mathbb E}^{{\mathbb P}_{0}}\left[\rho^X_t(\xi_t-\psi_0'(X)t) \, |\, {\mathcal G}_s\right] = 
\rho^X_s (\xi_s-\psi_0'(X)s).
\label{eq:zz62}
\end{eqnarray}
\noindent
Then using $\{\rho^X_t\}$ to make a change of measure from ${\mathbb P}_0$ to ${\mathbb P}$ 
we obtain
\begin{eqnarray}
{\mathbb E}^{{\mathbb P}}\left[\xi_t-\psi_0'(X)t \, |\, {\mathcal G}_s\right] = 
\xi_s-\psi_0'(X)s , 
\end{eqnarray}
and the result follows if we set $Y = \psi_0'(X)$. 
Next we introduce the ``projected'' process  $\{{\hat m}_t\}$ defined by 
$ \hat m_t= {\mathbb E}^{\mathbb P}\left[ m_t \, |\, {\mathcal F}_t \right]$. 
We note that since $\{m_t\}$ is a $(\{{\mathcal G}_t \} ,{\mathbb P})$-martingale we have
\begin{eqnarray}
{\mathbb E}^{\mathbb P}[\hat m_t|{\mathcal F}_s] 
&=&
 {\mathbb E}^{\mathbb P}[\xi_t-Yt \,| \,{\mathcal F}_s] 
\nonumber \\ 
&=& 
{\mathbb E}^{\mathbb P}\left[ {\mathbb E}^{\mathbb P}
[\xi_t-Yt \,|\, {\mathcal G}_s]|{\mathcal F}_s\right] 
\nonumber \\
&=&
 {\mathbb E}^{\mathbb P}[\xi_s-Ys \,| \,{\mathcal F}_s] 
\nonumber \\ 
&=& \hat m_s,
\label{eq:61}
\end{eqnarray}
and thus $\{\hat m_t\}$ is an $(\{{\mathcal F}_t \} ,{\mathbb P})$-martingale. 
Finally we observe that
\begin{align}
{\mathbb E}^{\mathbb P}\left[M_t|{\mathcal F}_s\right] 
= 
{\mathbb E}^{\mathbb P}\left[\left. \xi_t - \int_0^t {\hat Y}_u \,\rd u \right| {\mathcal F}_s\right] 
=
{\mathbb E}^{\mathbb P}[\xi_t|{\mathcal F}_s]  - 
{\mathbb E}^{\mathbb P}\left[\left. \int_s^t {\hat Y}_u \,\rd u \right| {\mathcal F}_s\right]
- \int_0^s {\hat Y}_u \,\rd u,
\label{eq:62}
\end{align}
where we have made use of the fact that the final term is ${\mathcal F}_s$-measurable. 
The fact that $\{{\hat m}_t\}$ and $\{{\hat Y}_t\}$
are both $({\mathcal F}_t,{\mathbb P})$-martingales implies
that
\begin{eqnarray}
{\mathbb E}^{\mathbb P}[\xi_t|{\mathcal F}_s] - \xi_s  
= (t-s){\hat Y}_s
={\mathbb E}^{\mathbb P}\left[\left. \int_s^t {\hat Y}_u \,\rd u \right| {\mathcal F}_s\right] ,
\end{eqnarray}
from which it follows that 
${\mathbb E}^{\mathbb P}\left[\left. M_t \right| {\mathcal F}_s\right] = M_s$, which is what we 
set out to prove. 
 \qed 
\end{proof}

Although the general information process does not admit an additive decomposition into signal 
and noise, it does admit a linear decomposition into terms representing (i) information already 
received and (ii) new information. The random variable $Y$ entering via its conditional 
expectation into the first of these terms is itself in general a nonlinear function of the 
message variable $X$. It follows on account of the convexity of the fiducial exponent that 
the marginal fiducial exponent is invertible, which ensures that $X$ can be expressed in 
terms of $Y$ by the relation $X = I_0(Y)$, which is linear if and only if the information 
process is Brownian. Thus signal and noise are deeply intertwined in the case of general 
L\'evy information. Vestiges of linearity remain, and these suffice to provide an overall 
element of tractability.

\section{Examples of L\'evy information processes}
\label{sec:4}

In a number of situations one can construct explicit examples of information processes, 
categorised by noise type. The Brownian and Poisson constructions, which are familiar in 
other contexts, can be seen as belonging to a unified scheme that brings out their 
differences and similarities. We then proceed to construct information processes of the 
gamma, the variance gamma, the negative binomial, the inverse Gaussian, and the normal 
inverse Gaussian type. It is interesting to take note of the diverse nature of noise, and to 
observe the many different ways in which messages can be conveyed in a noisy 
environment.  \\ 

\noindent {\bf Example 1: Brownian information}. 
On a probability space 
$({\mathit\Omega},{\mathcal F}, {\mathbb P})$, let $\{B_t\}$ be a Brownian 
motion, let $X$ be an independent random variable, and set
\begin{eqnarray}
\xi_t = X t + B_t.  
\end{eqnarray} 
The random process $\{\xi_t\}$ thereby defined, which we call the Brownian information 
process, is $\mathcal F^X$-conditionally $K_X$-L\'evy, with conditional characteristic 
$K_{X} = (X, 1, 0)$ and conditional exponent $\psi_X(\alpha) = X \alpha + \half  \alpha^2$. 
The fiducial characteristic is $K_0$ = $(0, 1, 0)$, the fiducial exponent is $\psi_0(\alpha) = 
\half  \alpha^2$, and the associated fiducial process or ``noise type'' is standard Brownian 
motion. In the case of Brownian information, there is a linear separation of the process into 
signal and noise. This model, considered by Wonham (1965), is perhaps the simplest 
continuous-time generalisation of the example described by Wiener (1948). The message is 
given by the value of $X$, but $X$ can only be observed indirectly, through $\{\xi_t\}$. The 
observations of $X$ are obscured by the noise represented by the Brownian motion $\{B_t\}$. 
Since the signal term grows linearly in time, whereas $|B_t|\sim\sqrt{t}$, it is intuitively 
plausible that observations of $\{\xi_t\}$ will asymptotically reveal the value of $X$, and a 
direct calculation using properties of the normal distribution function confirms that 
$ t^{-1}\xi_t$ converges in probability to $X$; this is consistent with Proposition~\ref{prop:1} 
if we note that $\psi'_0(\alpha)=\alpha$ and $I_0(y)=y$ in the Brownian case. 

The best estimate for $X$ conditional on ${\mathcal F}_t$ is given by (\ref{best estimate}), 
which can be derived by use of the generalised Bayes formula (\ref{eq:zz14}). In the 
Brownian case there is an elementary method leading to the same result, worth mentioning 
briefly since it is of interest. First we present an alternative proof of Proposition~\ref{prop:5} 
in the Brownian case  that uses a Brownian bridge argument. 

We recall that if $s>s_1>0$ 
then $B_s$  and $s^{-1}B_s - s_1^{-1}B_{s_1} $ are independent. More generally, we 
observe that if $s>s_1 > s_2$, then $B_s$\,, $s^{-1}B_s - s_1^{-1}B_{s_1}$, and 
$s_1^{-1}B_{s_1}- s_2^{-1}B_{s_2}$ are independent, and that 
$s^{-1}\xi_s - s_1^{-1}\xi_{s_1} =s^{-1}B_s - s_1^{-1}B_{s_1}$. Extending this line of 
reasoning, we see that for any $a \in{\mathds R}$ we have
\begin{eqnarray}
{\mathbb P}\left( \xi_t\leq a \, | \, \xi_s,\xi_{s_1}, \ldots,\xi_{s_k}\right) 
&=& 
{\mathbb P}\left(\xi_t\leq a \, \Big| \, \xi_s, \frac{\xi_s}{s} -\frac{\xi_{s_1}}{s_1}, 
\ldots, \, \frac{\xi_{s_{k-1}}}{s_{k-1}} - \frac{\xi_{s_k}}{s_k} \right) \nonumber \\ 
&=& 
{\mathbb P}\left( \xi_t\leq a \, \Big| \, \xi_s \right),
\label{eq:4}
\end{eqnarray}
since $\xi_t$ and $\xi_s$ are independent of $s^{-1}B_s -s_1^{-1}B_{s_1}$, $\ldots$\,, 
$\,s_{k-1}^{-1}B_{s_{k-1}} -s_k^{-1}B_{s_k}$, and that gives us the Markov property 
(\ref{Markov property}). Since we have established that $X$ is ${\mathcal F}_\infty$-measurable, 
it follows that (\ref{Markov consequence}) holds. As a consequence, the \textit{a posteriori} 
distribution of $X$ can be worked out by use of the standard Bayes formula, and for the best 
estimate of $X$ we obtain  
\begin{eqnarray}
{\hat X}_t = \frac{ \int x \,  \exp ({x\xi_t - \frac{1}{2}x^2 t})\, \pi (\rd x)}
{\int \exp ({x\xi_t - \frac{1}{2}x^2 t})\, \pi (\rd x)} .
\label{eq:zz28}
\end{eqnarray}

The innovations representation (\ref{eq:59}) in the case of a Brownian information process 
can be derived by the following argument. We observe that the 
$(\{\mathcal F_t \},{\mathbb P}_0)$-martingale $\{\Phi_t\}$ defined in (\ref{eq:9}) is a 
``space-time'' function of the form
\begin{eqnarray}
\Phi_t := {\mathbb E}^{{\mathbb P}_{0}} \!\left[ \rho_t \,|\, {\mathcal F}_t\right] = 
\int  \exp \left({x\xi_t - \frac{1}{2}x^2 t}\right)\, \pi (\rd x).
\label{eq:zz29}
\end{eqnarray}
By use of the Ito calculus together with (\ref {eq:zz28}), we deduce that 
$\rd \Phi_t = {\hat X}_t \, \Phi_t \,\rd \xi_t$, and thus by integration we obtain
\begin{eqnarray}
\Phi_t = \exp\left(\int_0^t {\hat X}_s {\rm d}\xi_s - \frac{1}{2}\int_0^t {\hat X}_s^2 
{\rm d}s\right) . 
\label{exponential martingale}
\end{eqnarray}
Since $\{\xi_t\}$ is an $(\{\mathcal F_t\},{\mathbb P}_0)$-Brownian motion, it follows from 
(\ref{exponential martingale}) by the Girsanov theorem 
that the process $\{M_t\}$ defined by 
\begin{eqnarray}
\xi_t  =  \int_0^t {\hat X}_s \, {\rd}s + M_t 
\label{eq:14}
\end{eqnarray}
is an $(\{\mathcal F_t\},{\mathbb P})$-Brownian motion, which we call the innovations 
process (see, e.g., Heunis 2011). The increments of $\{M_t\}$ represent 
the arrival of new information. 

We conclude our discussion of Brownian information with the following remarks. In problems 
involving prediction and valuation, it is not uncommon that the message is revealed after the 
passage of a finite amount of time. This is often the case in applications to finance, where the 
message takes the form of a random cash flow at some future date, or, more generally, a 
random factor that affects such a cash flow. There are also numerous examples coming from 
the physical sciences, economics and operations research where the goal of an agent is to 
form a view concerning the outcome of a future event by monitoring the flow of information 
relating to it. How does one handle problems involving the revelation of information over finite time horizons? 

One way of modelling finite time horizon scenarios in the present context is by use of a time 
change. If $\{\xi_t\}$ is a L\'evy information process with message $X$ and a specified 
fiducial exponent, then a generalisation of Proposition~\ref{prop:1} shows that the process 
$\{\xi_{tT}\}$ defined over the time interval $0 \leq t<T$ by
\begin{eqnarray}
\xi_{tT} =  \frac {T - t} {T} \,  \xi  \left( \frac {tT} {T-t} \right)
\label{finite time process}
\end{eqnarray}
reveals the value of $X$ in the limit as $t \rightarrow T$, and one can check that
\begin{eqnarray}
{\rm Cov} \left[ \xi_{sT}, \xi_{tT}  \,|\, \mathcal F^X \right] =  \frac {s(T - t)} {T} \,  
\psi''_0(X), \quad (0 \leq s \leq t < T).
\end{eqnarray}
In the case where $\{\xi_{t}\}$ is a Brownian information process represented as above in 
the form $\xi_t = Xt + B_t $, the time-changed process (\ref{finite time process}) takes the 
form $\xi_{tT} =Xt +  \beta_{tT} $, where $\{\beta_{tT}\}$ is a Brownian bridge over the 
interval $[0,T]$. Such processes have had applications in physics (Brody \& Hughston 2005, 
2006; see also Adler \textit{et al}.~2001, Brody \& Hughston 2002) and in finance 
(Brody \textit{et al}.~2007, 2008a, Rutkowski \& Yu 2007, Brody \textit{et al}.~2009, Filipovi\'c 
\textit{et al}.~2012). It seems reasonable to conjecture that time-changed L\'evy 
information processes of the more general type proposed above may be similarly applicable. 

\vspace{0.3cm}
\noindent {\bf Example 2: Poisson information}. 
Consider a situation in which an agent observes a series of events taking place at a random 
rate, and the agent wishes to determine the rate as best as possible since its value 
conveys an important piece of information. One can model the information flow in this situation 
by a modulated Poisson process for which the jump rate is an independent random 
variable. Such a scenario arises in many real-world situations, and has been investigated in 
the literature (Segall \& Kailath 1975, Segall \textit{et al}.~1975, Br\'emaud 1981, Di Masi \& 
Runggaldier 1983, Kailath \& Poor 1998). The Segall-Kailath scheme can be seen to emerge 
 naturally as an example of our general model for L\'evy information.  

As in the Brownian case, one can construct the relevant information process directly. 
On a probability space $({\mathit\Omega},{\mathcal F}, {\mathbb P})$, let 
$\{N(t) \}_{t \geq 0}$ be a standard Poisson process with jump rate $m>0$, let $X$ be an 
independent random variable, and set
\begin{eqnarray}
\xi_t = N(\re^X t).
\label{Poisson information}
\end{eqnarray}
Thus $\{\xi_t\}$  is a time-changed Poisson process, and the effect of the signal is to randomly 
modulate the rate at which the process jumps.  It is evident that $\{\xi_t\}$ is 
$\mathcal F^X$-conditionally L\'evy and satisfies the conditions of Definition~\ref{def:1}. In 
particular, 
\begin{eqnarray}
\mathbb E \left[\exp \left( {\alpha N(\re^X t) } \right)\, | \, \mathcal F^X \right] = 
\exp\left( m\re^X (\re^{\alpha} - 1 )\, t \right),
\end{eqnarray}
and for fixed $X$ one obtains a Poisson process with rate $m \re^X$. It follows that 
(\ref{Poisson information}) is an information process.  The fiducial characteristic is given by 
$K_0$ = $(0, 0, m\delta_1({\rd}z))$, that of a Poisson process with unit jumps at the rate $m$, 
where $\delta_1({\rd}z)$ is the Dirac measure with unit mass at $z=1$, and the fiducial 
exponent is $\psi_0(\alpha) = m({\re}^{\alpha} -1)$. A calculation using (\ref{eq:zz8}) shows that 
$K_X = (0,0,m \re^{X} \delta_1({\rd}z))$, and that $\psi_X(\alpha) =m \re^{X}(\re^{\alpha}-1)$. 
The relation between signal and noise in the case of Poisson information is 
rather subtle. The noise is associated with the random fluctuations of the inter-arrival times of 
the jumps, whereas the message determines the average rate at which the jumps occur. 

It will be instructive in this example to work out the conditional distribution of $X$ by elementary 
methods. Since $X$ is ${\mathcal F}_{\infty}$-measurable and $\{\xi_t\}$ has the Markov property, 
we have
\begin{eqnarray}
F^X_t (y) := \mathbb P (X\leq y \,|\, \mathcal F_t) = \mathbb P (X\leq y \,|\, \mathcal \xi_t)
\end{eqnarray}
for $y\in{\mathds R}$. It follows then from the Bayes law for an information process taking
values in ${\mathds N}_0$ that
 \begin{eqnarray}
\mathbb P (X\leq y \,|\, \xi_t = n)
 = 
\frac{\int {\mathds 1}\{x\leq y\} 
\mathbb P (\xi_t = n \, | \, X = x) \, \rd F^X(x)}
{\int \mathbb P (\xi_t = n \, | \, X = x) \, \rd F^X(x)} .
\label{discrete Bayes}
\end{eqnarray}
In the case of Poisson information the relevant conditional distribution is
\begin{eqnarray}
\mathbb P (\xi_t = n \, | \, X = x) = \exp (-mt \re^x ) \frac { (mt \re^x )^n}{n!} . 
\label{Poisson distribution}
\end{eqnarray}
After some cancellation we deduce that
\begin{eqnarray}
\mathbb P (X\leq y \,|\, \xi_t = n)
 = 
\frac{\int {\mathds 1}\{x\leq y\} 
 \exp (xn-m (\re^x -1) t) \, \rd F^X(x)}
{\int \exp (xn-m(\re^x -1) t) \, \rd F^X(x)} ,
\end{eqnarray}
and hence 
\begin{eqnarray}
F^X_t (y) 
 = 
\frac{\int {\mathds 1}\{x\leq y\} 
 \exp (x  \xi_t -m (\re^x -1) t) \, \rd F^X(x)}
{\int \exp (x  \xi_t -m (\re^x -1) t) \, \rd F^X(x)} ,
\end{eqnarray}
and thus
\begin{eqnarray}
\pi_t (\rd x) =  \frac{\exp (x  \xi_t -m (\re^x -1) t) }
{\int \exp(x\xi_t -m (\re^x -1) t) \, \pi(\rd x)} \pi(\rd x),
\label{eq:w56}
\end{eqnarray}
which we can see is consistent with (\ref{eq:zz18}) if we recall that in the case of noise of the 
Poisson type the fiducial exponent is given by $\psi_0(\alpha) = m({\re}^{\alpha} -1)$. 

If a Geiger counter is monitored continuously in time, the sound that it produces provides a nice example of a Poisson information process. The crucial message (proximity to radioactivity) carried by the noisy sputter of the instrument is represented by the rate at which the clicks occur. 

\vspace{0.3cm}
%
\noindent {\bf Example 3: Gamma information}. 
It will be convenient first to recall a few definitions and conventions (cf. Yor 2007, Brody 
\textit{et al.} 2008b, Brody \textit {et al.} 2012).  Let $m$ and $\kappa$ be positive numbers. 
By a  gamma process with rate $m$  and scale $\kappa$ on a probability space 
$({\mathit\Omega},{\mathcal F},{\mathbb P})$ we mean a L\'evy process 
$\{\gamma_t\}_{t \geq 0}$ with exponent
\begin{eqnarray}
t^{-1} \ln {\mathbb E}^{\mathbb P}\left[\exp(\alpha \gamma_t)\right] = 
-m \ln (1 - \kappa \alpha) 
\label{eq:z54}
\end{eqnarray}
for $\alpha \in A_ {\mathds C} = \{w \in {\mathds C}\, |\, {\rm Re}\,w < \kappa^{-1}\}$. 
The probability density for $\gamma_t$ is
\begin{eqnarray}
{\mathbb P}({\mathit\gamma}_t \in \rd x) = {\mathds 1}\{x>0\}
\frac{ \kappa ^{-mt} x^{mt-1} \exp{(-x/\kappa )} } { \Gamma[mt]} \, \rd x, 
\end{eqnarray}
where $\Gamma[a]$ is the gamma function. A short calculation making use of the functional 
equation $\Gamma[a + 1] = a \Gamma[a ]$ shows that 
${\mathbb E}^{\mathbb P}\,[\gamma_t] = m\kappa t $ and 
${\rm Var}^{\mathbb P}\,[\gamma_t] = m\kappa^2 t. $ Clearly, the mean and variance 
determine the rate and scale. If $\kappa = 1$ we say that $\{\gamma_t\}$ is a 
\textit{standard} gamma process with rate $m$. 
If $\kappa\neq1$ we say that $\{ \gamma_t\}$ is a scaled gamma process.
The L\'evy measure associated with the gamma process is 
\begin{eqnarray}
\nu( {\rd} z) =  {\mathds 1}\{z>0\}\, m\, z^{-1}  \exp ({-\kappa z})\, {\rd} z.
\end{eqnarray}
It follows that $\nu ({\mathds R} \backslash \{0\} ) = \infty$ and hence that the gamma 
process has infinite activity.  
Now let $\{\xi_t\}$ be a standard gamma process with rate  $m$ on a probability space 
$({\mathit\Omega},{\mathcal F},{\mathbb P}_0)$, and let $\lambda \in \mathds R$ satisfy 
$\lambda<1$. Then the process $\{\rho_t^\lambda\}$ defined by 
\begin{eqnarray}
\rho_t^\lambda = (1-\lambda)^{mt}\re^{\lambda\gamma_t}
\end{eqnarray}
is an $(\{{\mathcal F}_t\},{\mathbb P}_0)$-martingale. If we let $\{\rho_t^\lambda\}$ act as 
a change of measure density for the transformation ${\mathbb P}_0\to{\mathbb P}_\lambda$, 
then we find that $\{\gamma_t\}$ is a \textit{scaled} gamma process under 
${\mathbb P}_\lambda$, with rate $m$ and scale $1/(1-\lambda)$. Thus we see that the effect 
of an Esscher transformation on a gamma process is to alter its scale. With these facts in 
mind, one can establish the following: 

\begin{proposition}
Let $\{\gamma_t\}$ be a standard gamma process with rate $m$ on a probability space
$({\mathit\Omega},{\mathcal F},{\mathbb P})$, and let the independent random 
variable $X$ satisfy $X<1$ almost surely. Then the process $\{\xi_t\}$ defined by
\begin{eqnarray}
\xi_t = \frac{1}{1-X}\, \gamma_t 
\label{eq:z57}
\end{eqnarray}
is a L\'evy information process with message $X$ and gamma noise, with fiducial exponent 
$\psi_0(\alpha) =  -m \ln (1 - \alpha)$ for 
$\alpha \in  \{w \in {\mathds C}\, |\, {\rm Re}\,w < 1\}$. 
\label{prop:8}
\end{proposition}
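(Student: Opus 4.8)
The plan is to verify Definition~\ref{def:1} directly: show that, conditionally on $\mathcal F^X$, the process $\{\xi_t\}$ is $K_X$-L\'evy with the stated fiducial exponent, and then check the compatibility and standing integrability conditions. First I would record the relevant Esscher data. With $\psi_0(\alpha) = -m\ln(1-\alpha)$ on $\{w\in\mathds C : {\rm Re}\,w<1\}$, the general Esscher relation $\psi_\lambda(\alpha) = \psi_0(\alpha+\lambda)-\psi_0(\lambda)$ gives
\[
\psi_\lambda(\alpha) = -m\ln\!\Big(1-\frac{\alpha}{1-\lambda}\Big),
\]
valid whenever ${\rm Re}(\alpha+\lambda)<1$, which by (\ref{eq:z54}) is exactly the exponent of a scaled gamma process of rate $m$ and scale $1/(1-\lambda)$. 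Hence the Esscher family built on $K_0$ is the family of scaled gamma characteristics, and the assertion to be proved is that $\{\xi_t\}$ is $\mathcal F^X$-conditionally a scaled gamma process of rate $m$ and random scale $1/(1-X)$.

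Next I would carry out the conditioning step. Since $X$ is $\mathbb P$-independent of $\{\gamma_t\}$ and $1/(1-X)$ is $\mathcal F^X$-measurable and almost surely positive and finite, the conditional law of the whole path $\{\xi_t\}$ given $\mathcal F^X$ is obtained by freezing $X=x$ and taking the law of $\{\gamma_t/(1-x)\}$; a deterministic positive multiple of a L\'evy process is again a L\'evy process, so $\{\xi_t\}$ has $\mathcal F^X$-conditionally stationary and independent increments under $\mathbb P$. Its conditional exponent follows from (\ref{eq:z54}):
\[
t^{-1}\ln\mathbb E^{\mathbb P}\big[\exp(\alpha\xi_t)\,\big|\,\mathcal F^X\big]
= t^{-1}\ln\mathbb E^{\mathbb P}\big[\exp\big(\tfrac{\alpha}{1-X}\,\gamma_t\big)\,\big|\,\mathcal F^X\big]
= -m\ln\!\Big(1-\frac{\alpha}{1-X}\Big)
\]
for $\alpha\in\mathds C^{\rm I}$, the argument of the logarithm having strictly positive real part since ${\rm Re}\,\alpha=0$ and $X<1$, so the expression is well defined. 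By the first paragraph this equals $\psi_0(\alpha+X)-\psi_0(X)$, which is precisely the conditional exponent (\ref{eq:zz8}). Therefore $\{\xi_t\}$ is conditionally $K_X$-L\'evy given $\mathcal F^X$, i.e.\ a L\'evy information process with message $X$ and fiducial exponent $\psi_0(\alpha)=-m\ln(1-\alpha)$.

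Finally I would dispose of the side conditions. The admissible set is $A=\{w\in\mathds R : w<1\}$, and since $X<1$ almost surely we have $S_X\subset A$, so $X$ is compatible with $\psi_0$; and square integrability of $\{\xi_t\}$ under $\mathbb P$ amounts here to $\mathbb E^{\mathbb P}[(1-X)^{-2}]<\infty$, which is exactly integrability of $\psi_0''(X)=m/(1-X)^2$, consistent with the paper's standing assumption. I do not expect a genuine obstacle; the one point requiring care is that ``conditionally $K_X$-L\'evy'' concerns the entire conditional increment structure and not merely the one-dimensional marginals, which is why the conditioning step is phrased via the conditional law of the full path rather than by computing only $\mathbb E^{\mathbb P}[\exp(\alpha\xi_t)\,|\,\mathcal F^X]$. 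As a cross-check one could instead invoke Proposition~\ref{prop:2}, noting that scaling $\gamma_t$ by $1/(1-X)$ reproduces in law the randomised Esscher transform of the standard gamma process, but the direct verification above is the most economical route.
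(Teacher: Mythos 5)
Your proposal is correct and follows essentially the same route as the paper's proof: observe that $\{\xi_t\}$ is $\mathcal F^X$-conditionally a scaled gamma process, compute the conditional exponent from (\ref{eq:z54}) to get $-m\ln\bigl(1-\alpha/(1-X)\bigr)$, and rewrite it as $\psi_0(\alpha+X)-\psi_0(X)$. Your additional remarks on the conditional path law, compatibility ($S_X\subset A$) and square integrability merely make explicit what the paper treats as evident or as standing assumptions, so there is no substantive difference.
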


\begin{proof}
It is evident that $\{\xi_t\}$ is $\mathcal F^X $-conditionally a scaled gamma process. As a 
consequence of (\ref{eq:z54}) we have 
\begin{eqnarray}
\frac{1}{t} \ln {\mathbb E}^{\mathbb P}\left[\exp(\alpha \xi_t)|X\right]  = 
\frac{1}{t} \ln {\mathbb E}^{\mathbb P}\left[\left.\exp\left(\frac{\alpha\gamma_t}{1-X}
\right) \right| X\right]  = \ln \left(1-\frac{\alpha}{1-X}\right)^{-m}
\end{eqnarray}
for $\alpha \in \mathds C^{\rm I}$. Then we note that
\begin{eqnarray}
-m \ln \left(1-\frac{\alpha}{1-X}\right) =
-m \ln \left(1-(X + \alpha) \right)  \,+\, m \ln \left(1-X \right). 
\label{eq:z58}
\end{eqnarray}
It follows that the $\mathcal F^X $-conditional $ {\mathbb P}$ exponent of $\{\xi_t\}$ 
is $\psi_0(X+\alpha)-\psi_0(X)$. 
 \qed 
\end{proof}

The gamma filter arises as follows. An agent observes a process of accumulation. Typically 
there are many small increments, but now and then there are large increments. 
The rate at which the process is growing is the figure that the agent wishes to estimate
as accurately as possible.  The accumulation can be 
modelled by gamma information, and the associated filter can be used to estimate the 
growth rate. It has long been recognised that the gamma process is useful in describing 
phenomena such as the water level of a dam or the totality of the claims made in a  
large portfolio of insurance contracts (Gani 1957, Kendall 1957, Gani \& Pyke 1960). Use of the 
gamma information process and related bridge processes, with applications in finance and 
insurance, is pursued in Brody \textit{et al}.~(2008b), Hoyle (2010), and Hoyle 
\textit{et al}.~(2011). 
We draw the reader's attention to Yor (2007) and references cited 
therein, where it is shown how certain additive properties of  Brownian motion have 
multiplicative analogues in the case of the gamma process. 
One notes in particular the remarkable 
property that $\gamma_t$ and $\gamma_s / \gamma_t$ are independent for $t\ge s \ge 0$. 
Making use of this relation, it will be instructive to present an alternative derivation of the 
optimal filter for gamma noise. We begin by establishing that  the process 
defined by (\ref{eq:z57}) has the Markov property. 
We observe first that for any times 
$t\ge s\ge s_1\ge s_2\ge\cdots\ge s_k$ the variables $\gamma_{s_1}/\gamma_{s}, 
\gamma_{s_2}/\gamma_{s_1}$, and so on,  are independent of one another and are independent 
of $\gamma_s$ and $\gamma_t$. It follows that 
\begin{eqnarray}
{\mathbb P} \left( \xi_t < a | \xi_s, \xi_{s_1}, \ldots,\xi_{s_k} \right) 
&=& 
{\mathbb P} \left( \xi_t < a |  (1-X)^{-1} \gamma_s, \ldots,
(1-X)^{-1}\gamma_{s_k}\right) 
\nonumber \\ 
&=& 
{\mathbb P} \left( \xi_t < a \left| (1-X)^{-1}\gamma_s, \, \frac{\gamma_{s_1}}{\gamma_s}, \,
\frac{\gamma_{s_2}}{\gamma_{s_1}},\,\ldots\,, \frac{\gamma_{s_k}}{\gamma_{s_{k-1}}}
\right. \right) 
\nonumber \\ 
&=& 
{\mathbb P} \left( \xi_t < a \left| \xi_s
\right. \right),
\end{eqnarray}
since $\{ \gamma_t \} $ and $X$ are independent, and that gives us  
(\ref{Markov property}). In working out the distribution of $X$ given 
${\mathcal F}_t$ it suffices therefore to work out the distribution of $X$ given 
$\xi_t$. We note that the Bayes formula implies that 
\begin{eqnarray}
\pi_t ( \rd x) = \frac{ \rho (\xi_t | X=x)}{\int \rho (\xi_t | X=x)\,\pi ( \rd x)} \, \pi ( \rd x) , 
\end{eqnarray}
where $\pi ( \rd x)$ is the unconditional distribution of $X$, and  $\rho (\xi | X=x) $ is the 
conditional density for the random variable $\xi_t$, which can be calculated as follows: 
\begin{eqnarray}
\rho (\xi | X=x) &=& \frac{\rd}{\rd \xi}\, {\mathbb P} (\xi_t \leq \xi |X=x) = 
\frac{\rd}{\rd \xi}\, {\mathbb P} ( (1-X)^{-1} \gamma_t \leq \xi | X=x) 
\nonumber \\ 
&& \hspace{-1.5cm} =
\frac{\rd}{\rd \xi}\, {\mathbb P} \left( \left.\gamma_t \leq (1-X) \xi \right| X = x \right) 
=
\frac{ \xi^{mt-1} (1-x)^{mt}  \re^{-(1-x)\xi} } {\Gamma\left[mt\right]} . 
\end{eqnarray} 
It follows that the optimal filter in the case of gamma noise is given by
\begin{eqnarray}
\pi_t (\rd x) = \frac{(1-x)^{mt}\exp({x\xi_t})}{\int_{-\infty}^1 (1-x)^{mt} 
\exp ({x\xi_t}) \pi(\rd x)} \, \pi(\rd x) , 
\end{eqnarray}
We conclude with the following observation. In the case of Brownian information, it is well 
known (and implicit in the example of Wiener 1948) that if the signal is  Gaussian, then the 
optimal filter is a linear function of the observation $\xi_t$. One might therefore ask in the 
case of a gamma information process if some special choice of the signal distribution gives 
rise to a linear filter. The answer is affirmative. Let $U$ be a gamma-distributed random
 variable with the distribution 
\begin{eqnarray}
{\mathbb P}({\mathit U} \in \rd u)= {\mathds 1}\{u>0\}\, \frac{ \theta ^{r} u^{r-1} 
\exp{(- \theta u )} } { \Gamma[r]} \, \rd u, 
\end{eqnarray}
where $r>1$ and $\theta>0$ are parameters, and set $X=1-U$. Let $\{\xi_t\}$ be a gamma 
information process carrying message $X$, let $Y=\psi_0'(X)=m/(1-X)$, and set 
$\tau = (r - 1)/m$. Then the optimal filter for $Y$ is given by
\begin{eqnarray}
{\hat Y}_t:={\mathbb E}^{\mathbb P}[Y|{\mathcal F}_t]= \frac {\xi_t+\theta } 
{t+\tau}. 
\end{eqnarray}

\vspace{0.3cm}
\noindent {\bf Example 4: Variance-gamma information}. 
The so-called variance-gamma or VG process (Madan \& Seneta 1990, Madan \& Milne 1991, 
Madan \textit{et al}.~1998) was introduced in the theory of finance. The relevant definitions and 
conventions are as follows.  By a VG process with drift $\mu \in {\mathds R}$, volatility 
$\sigma \geq 0$, and rate $m>0$, we mean a L\'evy process with exponent 
\begin{eqnarray}
\psi(\alpha) = -m \ln \left(1-\frac{\mu}{m}\, \alpha - \frac{\sigma^2}{2m}\, 
\alpha^2  \right).
\label{VG exponent}
\end{eqnarray}
The VG process admits representations in terms of simpler L\'evy processes. Let 
$\{\gamma_t\}$ be a standard gamma process on $({\mathit\Omega},{\mathcal F},
{\mathbb P})$, with rate $m$, as defined in the previous example, and let $\{B_t\}$ be a 
standard Brownian motion, independent of $\{\gamma_t\}$. We call the scaled process 
$\{{\mathit\Gamma}_t\}$ defined by ${\mathit\Gamma}_t = m^{-1} \gamma_t$ a gamma 
subordinator with rate $m$. Note that ${\mathit\Gamma}_t$ has dimensions of time and that 
${\mathbb E}^{\mathbb P}[{\mathit\Gamma}_t]=t$. A calculation shows that the L\'evy process 
$\{V_t\}$ defined by
\begin{eqnarray}
V_t = \mu {\mathit\Gamma}_t + \sigma B_{{\mathit\Gamma}_t}
\end{eqnarray} 
has the exponent  (\ref{VG exponent}). The VG process thus takes the form of a Brownian 
motion with drift, time-changed by a gamma subordinator. If $\mu = 0$ and $\sigma=1$, 
we say that $\{V_t\}$ is a ``standard'' VG process, with rate parameter $m$. If $\mu \neq 0$, 
we say that $\{V_t\}$ is a ``drifted'' VG process. One can always choose units of time such 
that $m =1$, but for applications it is better to choose conventional units of time (seconds for 
physics, years for economics), and treat $m$ as a model parameter. 
In the limit  $\sigma \rightarrow 0$ we obtain a gamma process with rate
$m$ and scale $\mu/m$. In the limit  $m \rightarrow \infty$ we obtain a 
Brownian motion with drift $\mu$ and volatility $\sigma$. 

An alternative representation of the VG process results if we let $\{\gamma^1_t\}$ 
and $\{\gamma^2_t\}$ be independent standard gamma processes on 
$({\mathit\Omega},{\mathcal F},{\mathbb P})$, with rate $m$, and set 
\begin{eqnarray}
V_t =\kappa_1 \gamma^1_t - \kappa_2 \gamma^2_t,
\end{eqnarray}
where $\kappa_1$ and $\kappa_2$ are nonnegative constants. A calculation shows that the 
exponent is of the form (\ref{VG exponent}). In particular, we have
\begin{eqnarray}
\psi(\alpha) = -m \ln \left(1-( \kappa_1 -  \kappa_2)\, 
\alpha - \kappa_1 \kappa_2\, \alpha^2  \right),
\end{eqnarray}
where
$\mu = m (\kappa_1 - \kappa_2)$ and 
$ \sigma^2 = 2m \kappa_1  \kappa_2 \,$, or equivalently
\begin{eqnarray}
\kappa_1=   \frac {1}{2m} \left( \mu + \sqrt{\mu^2 + 2m \sigma^2} \right)    
\quad {\rm and} \quad
\kappa_2=   \frac {1}{2m} \left( -\mu + \sqrt{\mu^2 + 2m \sigma^2} \right),   
\end{eqnarray} 
where $\alpha\in \{w\in{\mathds C}: {-1/\kappa_2  <\rm Re}\,w<1/\kappa_1\}$. Now let 
$\{\xi_t\}$ be a standard VG process on $({\mathit\Omega},{\mathcal F},{\mathbb P_0})$, 
with exponent $\psi_0(\alpha) = 
-m\ln(1-(2m)^{-1}\alpha^2)$ for $\alpha\in \{w\in{\mathds C}: {| \rm Re}\,w|<\sqrt{2m}\}$. 
Under the transformed measure ${\mathbb P_{\lambda}}$ defined by the change-of-measure
martingale (\ref{Esscher martingale}), one finds that  $\{\xi_t\}$ is a drifted VG process, with 
\begin{eqnarray}
\mu =   \lambda \, \left(1- \frac{1}{2m}\, \lambda^2  \right)^{-1}     
\quad {\rm and} \quad
\sigma =   \left(1- \frac{1}{2m}\, \lambda^2  \right)^{-\half}    
\end{eqnarray}
for $| \lambda | <\sqrt{2m}$. Thus in the case of the VG process an Esscher transformation 
affects both the drift and the volatility. Note that for large $m$ the effect on the volatility is 
insignificant, whereas the effect on the drift reduces to that of an ordinary Girsanov 
transformation. 

With these facts in hand, we are now in a position to construct the VG information process. 
We fix a probability space $({\mathit\Omega},{\mathcal F},{\mathbb P})$ and a number 
$m>0$.
\begin{proposition}
Let $\{{\mathit\Gamma}_t\}$ be a standard gamma subordinator with rate $m$, let $\{B_t\}$ 
be an independent Brownian motion, and let the independent random variable $X$ satisfy 
$|X| <\sqrt{2m}$ almost surely. Then the process $\{\xi_t\}$ defined by
\begin{eqnarray}
\xi_t = X \left(1- \frac{1}{2m}\, X^2  \right)^{-1} {\mathit\Gamma}_t \, + \,
\left(1- \frac{1}{2m}\, X^2  \right)^{-\frac{1}{2}} B({\mathit\Gamma}_t) 
\label{eq:x45} 
\end{eqnarray}
is a L\'evy information process with message $X$ and VG noise, with fiducial exponent 
\begin{eqnarray}
\psi_0(\alpha) = -m\ln \left(1- \frac{1}{2m}\, \alpha^2  \right)
\label{standard VG}
\end{eqnarray} 
for $\alpha\in \{w\in{\mathds C}: {\rm Re}\,w<\sqrt{2m}\}$. 
\label{prop:9}
\end{proposition}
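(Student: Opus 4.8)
The plan is to verify directly that the process $\{\xi_t\}$ defined in (\ref{eq:x45}) satisfies Definition~\ref{def:1}: conditionally on $\mathcal F^X$ it must have stationary and independent increments under $\mathbb P$, with conditional L\'evy exponent $\psi_0(\alpha+X)-\psi_0(X)$ for $\alpha\in\mathds C^{\rm I}$, where $\psi_0$ is the standard VG exponent (\ref{standard VG}). The starting observation is that $X$ is independent of the pair $(\{\mathit\Gamma_t\},\{B_t\})$, so on the event $\{X=x\}$ the process $\{\xi_t\}$ has the law of $\mu(x)\,\mathit\Gamma_t+\sigma(x)\,B(\mathit\Gamma_t)$, where $\mu(x)=x(1-\frac{x^2}{2m})^{-1}$ and $\sigma(x)=(1-\frac{x^2}{2m})^{-1/2}$ are now \emph{deterministic} constants. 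Thus freezing the message converts the random time change and random rescaling in (\ref{eq:x45}) into ordinary VG parameters.

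Next I would invoke the subordinated-Brownian-motion representation of the VG process recalled just above the proposition: for fixed real $\mu$ and $\sigma\ge 0$, the process $\mu\,\mathit\Gamma_t+\sigma B(\mathit\Gamma_t)$ is L\'evy with exponent $-m\ln(1-\frac{\mu}{m}\alpha-\frac{\sigma^2}{2m}\alpha^2)$. Applying this with $\mu=\mu(x)$, $\sigma=\sigma(x)$ shows that, conditional on $\{X=x\}$, $\{\xi_t\}$ has stationary and independent increments and conditional exponent
\begin{eqnarray}
\psi_x(\alpha)=-m\ln\left(1-\frac{x}{m(1-\frac{x^2}{2m})}\,\alpha-\frac{1}{2m(1-\frac{x^2}{2m})}\,\alpha^2\right),
\end{eqnarray}
valid for $\alpha\in\mathds C^{\rm I}$ (the argument of the logarithm has real part at least $1$ there). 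Since this holds for every $x$ in the support of $X$, it follows that $\{\xi_t\}$ is $\mathcal F^X$-conditionally L\'evy under $\mathbb P$.

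It then remains to check the algebraic identity $\psi_x(\alpha)=\psi_0(x+\alpha)-\psi_0(x)$. Writing $\psi_0(x+\alpha)-\psi_0(x)=-m\ln\big[(1-\frac{(x+\alpha)^2}{2m})/(1-\frac{x^2}{2m})\big]$, expanding $(x+\alpha)^2=x^2+2x\alpha+\alpha^2$ in the numerator and dividing through by $1-\frac{x^2}{2m}$, one recovers exactly the argument of the logarithm in $\psi_x(\alpha)$; this is the single genuinely computational step, but it is short. Hence the $\mathcal F^X$-conditional exponent of $\{\xi_t\}$ has the required form $\psi_0(\alpha+X)-\psi_0(X)$, and the noise type is the standard VG exponent (\ref{standard VG}).

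Finally, the compatibility condition is automatic: the admissible set associated with (\ref{standard VG}) is $A=(-\sqrt{2m},\sqrt{2m})$, so the hypothesis $|X|<\sqrt{2m}$ a.s.\ gives $S_X\subset A$, confirming that $X$ is a legitimate message for VG noise. I expect no serious obstacle here; the only point deserving a line of care is the conditioning argument of the first paragraph --- that independence of $X$ and $(\{\mathit\Gamma_t\},\{B_t\})$ genuinely lets one treat $\mu(X),\sigma(X)$ as constants under the conditional law --- after which the result reduces to the known VG exponent and the short logarithmic identity.
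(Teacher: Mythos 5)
Your proposal is correct and follows essentially the same route as the paper: identify $\{\xi_t\}$ as $\mathcal F^X$-conditionally a drifted VG process with $\mu_X = X(1-\tfrac{X^2}{2m})^{-1}$ and $\sigma_X=(1-\tfrac{X^2}{2m})^{-1/2}$, apply the VG exponent formula, and verify algebraically that the conditional exponent equals $\psi_0(X+\alpha)-\psi_0(X)$. Your added remarks on the conditioning step and the compatibility condition $S_X\subset(-\sqrt{2m},\sqrt{2m})$ are fine but do not change the substance of the argument.
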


\begin{proof}
Observe that $\{\xi_t\}$ is $\mathcal F^X $-conditionally a drifted VG process of the
form 
\begin{eqnarray}
\xi_t = \mu_X {\mathit\Gamma}_t + \sigma_X B({\mathit\Gamma}_t), 
\end{eqnarray}
where the drift and volatility coefficients are 
\begin{eqnarray}
\mu_X =   X \, \left(1- \frac{1}{2m}\, X^2  \right)^{-1}     
\quad {\rm and} \quad
\sigma_X =   \left(1- \frac{1}{2m}\, X^2  \right)^{-\frac{1}{2}}.   
\end{eqnarray}
The $\mathcal F^X $-conditional ${\mathbb P}$-exponent of $\{\xi_t\}$ is by
(\ref{VG exponent}) thus given for $\alpha \in \mathds C^{\rm I}$ by
\begin{eqnarray}
\psi_X(\alpha) 
&=& -m \ln \left(1-\frac{1}{m} \mu_X \, \alpha - \frac{1}{2m} \sigma_X^2\, \alpha^2  \right)
\nonumber \\ 
&=& -m \ln \left(1-\frac{1}{m} X  \left(1- \frac{1}{2m}\, X^2  \right)^{-1}\, \alpha - \frac{1}{2m} 
\left(1- \frac{1}{2m}\, X^2  \right)^{-1}\, 
\alpha^2  \right)
\nonumber \\ 
&=& -m\ln \left(1- \frac{1}{2m}\, (X + \alpha)^2  \right) +m\ln \left(1- \frac{1}{2m}\, X^2  \right),
\end{eqnarray}
which is evidently by  (\ref{standard VG}) of the form $ \psi_0(X + \alpha) - \psi_0(X)$, as required. 
 \qed 
\end{proof}

An alternative representation for the VG information process can be established by the same 
method if one randomly rescales the gamma subordinator appearing in the time-changed 
Brownian motion. The result is as follows. 

\begin{proposition}
Let $\{{\mathit\Gamma}_t\}$ be a gamma subordinator with rate $m$, let $\{B_t\}$ be an 
independent standard Brownian motion, and let the independent random variable $X$ satisfy 
$|X|<\sqrt{2m}$ almost surely. Write $\{ {\mathit\Gamma}^X_t \}$ for the subordinator: 
\begin{eqnarray}
{\mathit\Gamma}^X_t =  \left(1- \frac{1}{2m}\, X^2  \right)^{-1} {\mathit\Gamma}_t \, .
\end{eqnarray}
Then the process $\{\xi_t\}$ defined by
$\xi_t = X {\mathit\Gamma}^X_t  + B({\mathit\Gamma}^X_t)$
is a VG information process with message $X$.
\label{prop:10}
\end{proposition}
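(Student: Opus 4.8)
The plan is to follow the same template used for Proposition~\ref{prop:9}, exploiting the two equivalent ways of writing a VG process. The key observation is that the process defined by $\xi_t = X {\mathit\Gamma}^X_t + B({\mathit\Gamma}^X_t)$ is, conditional on $\mathcal F^X$, a time-changed Brownian motion with \emph{unit} volatility but with the gamma subordinator rescaled by the factor $(1-\frac{1}{2m}X^2)^{-1}$. So first I would verify that, conditional on $X$, the process $\{\xi_t\}$ has stationary and independent increments --- this is immediate since $\{{\mathit\Gamma}^X_t\}$ is itself $\mathcal F^X$-conditionally a L\'evy subordinator (a deterministic rescaling of $\{{\mathit\Gamma}_t\}$) and $\{B_t\}$ is an independent Brownian motion, so the subordinated process inherits conditionally stationary independent increments.

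Next I would compute the $\mathcal F^X$-conditional exponent. Conditioning on both $\mathcal F^X$ and on the value of ${\mathit\Gamma}^X_t$, one has ${\mathbb E}^{\mathbb P}[\exp(\alpha\xi_t)\,|\,\mathcal F^X,{\mathit\Gamma}^X_t] = \exp((\alpha X + \half\alpha^2){\mathit\Gamma}^X_t)$, and then averaging over ${\mathit\Gamma}^X_t$, which is $\mathcal F^X$-conditionally gamma-distributed with the appropriate shape and scale, using the gamma moment-generating function $t^{-1}\ln{\mathbb E}[\exp(\beta{\mathit\Gamma}_t)] = -\ln(1-m^{-1}\beta)$ for the standard subordinator, gives
\begin{eqnarray}
\psi_X(\alpha) = -m\ln\!\left(1 - \frac{1}{m}\left(1-\frac{1}{2m}X^2\right)^{-1}\!\left(\alpha X + \tfrac{1}{2}\alpha^2\right)\right)
\end{eqnarray}
for $\alpha \in \mathds C^{\rm I}$. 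Then it is a matter of straightforward algebra --- completing the square inside the logarithm --- to show that this equals $-m\ln(1-\frac{1}{2m}(X+\alpha)^2) + m\ln(1-\frac{1}{2m}X^2)$, which is precisely $\psi_0(X+\alpha) - \psi_0(X)$ with $\psi_0$ the standard VG fiducial exponent~(\ref{standard VG}). This establishes that $\{\xi_t\}$ satisfies Definition~\ref{def:1}.

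Alternatively, and perhaps more cleanly, I would simply invoke Proposition~\ref{prop:9} directly: expanding the definition, $\xi_t = X{\mathit\Gamma}^X_t + B({\mathit\Gamma}^X_t) = X(1-\frac{1}{2m}X^2)^{-1}{\mathit\Gamma}_t + B\big((1-\frac{1}{2m}X^2)^{-1}{\mathit\Gamma}_t\big)$, and since $B$ is a standard Brownian motion independent of $\{{\mathit\Gamma}_t\}$, the Brownian scaling property gives that $B\big((1-\frac{1}{2m}X^2)^{-1}{\mathit\Gamma}_t\big)$ is, conditional on $\mathcal F^X$, equal in law (jointly with the drift term, as a process) to $(1-\frac{1}{2m}X^2)^{-1/2}\tilde B({\mathit\Gamma}_t)$ for a standard Brownian motion $\tilde B$ independent of $\{{\mathit\Gamma}_t\}$. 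This recovers exactly the representation~(\ref{eq:x45}) appearing in Proposition~\ref{prop:9}, so the conclusion follows. The one point requiring a little care --- and the main thing to get right --- is the Brownian rescaling step: one needs $B\circ(cT_t)$ and $\sqrt{c}\,(\tilde B\circ T_t)$ to agree as processes in law, not merely at fixed times, which does hold because of the self-similarity of Brownian motion and the independence of $B$ from the subordinator. The compatibility condition $|X|<\sqrt{2m}$ ensures $1-\frac{1}{2m}X^2 > 0$ so that the rescaling factor is a well-defined positive random variable, and also that $S_X \subset A$ for the fiducial exponent.
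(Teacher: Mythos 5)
Your proposal is correct and in essence coincides with the paper's treatment: the paper offers no separate proof of Proposition~\ref{prop:10}, stating only that it follows ``by the same method'' as Proposition~\ref{prop:9}, which is exactly your conditional-exponent computation (and your alternative reduction, via Brownian scaling $B\big(c\,{\mathit\Gamma}_t\big)\stackrel{\rm law}{=}\sqrt{c}\,\tilde B({\mathit\Gamma}_t)$ conditionally on $\mathcal F^X$, recovers the representation (\ref{eq:x45}) directly, which is precisely the random rescaling of the subordinator the paper alludes to). The only blemish is the quoted subordinator exponent $t^{-1}\ln{\mathbb E}[\exp(\beta{\mathit\Gamma}_t)]=-\ln(1-m^{-1}\beta)$, which omits the overall factor $m$; your displayed formula for $\psi_X(\alpha)$ and the completing-the-square step are nevertheless correct as written.
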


A further representation of the VG information process arises as a consequence of the 
representation of the VG process as the asymmetric difference between two independent 
standard gamma processes. In particular, we have: 

\begin{proposition}
Let $\{\gamma^1_t\}$ and $\{\gamma^2_t\}$ be 
independent standard gamma processes, each with rate $m$, and let the independent random 
variable $X$ satisfy
$ |X| <\sqrt{2m}$ almost surely. Then the process $\{\xi_t\}$ defined by
\begin{eqnarray}
\xi_t = \frac{1}{\sqrt{2m}-X} \, \gamma^1_t - 
\frac{1}{\sqrt{2m} +X}\, \gamma^2_t
\label{eq:x53}
\end{eqnarray}
is a VG information process with message $X$.
\label{prop:11}
\end{proposition}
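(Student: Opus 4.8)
The plan is to verify that the process $\{\xi_t\}$ defined in (\ref{eq:x53}) satisfies Definition~\ref{def:1}, i.e.\ that conditionally on $\mathcal F^X$ it is a $\mathbb P$-L\'evy process with exponent $\psi_0(\alpha+X)-\psi_0(X)$ for $\alpha\in\mathds C^{\rm I}$, where $\psi_0(\alpha)=-m\ln(1-\tfrac{1}{2m}\alpha^2)$. First I would condition on $\mathcal F^X$. Since $\{\gamma^1_t\}$ and $\{\gamma^2_t\}$ are independent of $X$, conditionally on $\mathcal F^X$ they remain independent standard gamma processes with rate $m$, so $\{\xi_t\}$ is conditionally a difference of two independent scaled gamma processes, hence conditionally L\'evy with stationary independent increments; it therefore only remains to compute its conditional exponent.

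The key computational step uses the gamma exponent (\ref{eq:z54}): for a standard gamma process with rate $m$, $t^{-1}\ln\mathbb E^{\mathbb P}[\exp(\alpha\gamma_t)\mid X]=-m\ln(1-\alpha)$, valid for $\mathrm{Re}\,\alpha<1$. Writing $\kappa_1=(\sqrt{2m}-X)^{-1}$ and $\kappa_2=(\sqrt{2m}+X)^{-1}$, independence of $\{\gamma^1_t\}$ and $\{\gamma^2_t\}$ gives, for $\alpha\in\mathds C^{\rm I}$,
\begin{eqnarray}
\psi_X(\alpha)=-m\ln(1-\kappa_1\alpha)-m\ln(1+\kappa_2\alpha)=-m\ln\bigl((1-\kappa_1\alpha)(1+\kappa_2\alpha)\bigr).
\end{eqnarray}
Expanding the product, $(1-\kappa_1\alpha)(1+\kappa_2\alpha)=1-(\kappa_1-\kappa_2)\alpha-\kappa_1\kappa_2\alpha^2$. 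Then I would substitute $\kappa_1-\kappa_2=\tfrac{2X}{2m-X^2}$ and $\kappa_1\kappa_2=\tfrac{1}{2m-X^2}$, so that the argument of the logarithm becomes $1-\tfrac{2X}{2m-X^2}\alpha-\tfrac{1}{2m-X^2}\alpha^2=\tfrac{2m-X^2-2X\alpha-\alpha^2}{2m-X^2}=\tfrac{2m-(X+\alpha)^2}{2m-X^2}$.

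Hence $\psi_X(\alpha)=-m\ln\!\bigl(2m-(X+\alpha)^2\bigr)+m\ln(2m-X^2)$, which after factoring out $2m$ from each logarithm equals
\begin{eqnarray}
-m\ln\!\left(1-\frac{1}{2m}(X+\alpha)^2\right)+m\ln\!\left(1-\frac{1}{2m}X^2\right)=\psi_0(X+\alpha)-\psi_0(X),
\end{eqnarray}
which is exactly the required conditional exponent. This confirms that $\{\xi_t\}$ is a VG information process with message $X$ and fiducial exponent (\ref{standard VG}). The one point requiring a little care is the domain of validity: since $|X|<\sqrt{2m}$ almost surely we have $\kappa_1,\kappa_2>0$ and both $1-\kappa_1\alpha$ and $1+\kappa_2\alpha$ have positive real part for purely imaginary $\alpha$, so the logarithms are well defined; I would also note in passing that the representation is consistent with Proposition~\ref{prop:9} via the identity in law between $\kappa_1\gamma^1_t-\kappa_2\gamma^2_t$ and $\mu_X\Gamma_t+\sigma_X B(\Gamma_t)$ established in the discussion preceding Proposition~\ref{prop:9}, but this is not needed for the proof. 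There is no real obstacle here — the argument is a direct verification — the only mild subtlety is bookkeeping with the algebraic identities $\kappa_1-\kappa_2$ and $\kappa_1\kappa_2$ in terms of $X$ and $m$.
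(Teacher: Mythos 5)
Your verification is correct and follows exactly the route the paper intends: the paper states Proposition~\ref{prop:11} as a consequence of the asymmetric-difference representation $V_t=\kappa_1\gamma^1_t-\kappa_2\gamma^2_t$ introduced in Example 4, and your computation of the conditional exponent (using $\kappa_1-\kappa_2$ and $\kappa_1\kappa_2$ in terms of $X$ and $m$, then completing the square to get $\psi_0(X+\alpha)-\psi_0(X)$) is precisely the direct check, analogous to the proof of Proposition~\ref{prop:9}, that the paper leaves implicit. No gaps; the remark on the domain of the logarithms for $\alpha\in\mathds C^{\rm I}$ is a sensible added precaution.
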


\vspace{0.3cm}
\noindent {\bf Example 5: Negative-binomial information}. 
By a negative binomial process with rate parameter $m$ and probability parameter $q$, where 
$m > 0$ and $0 < q <1$, we mean a L\'evy process  with exponent 
\begin{eqnarray}
\psi_0(\alpha) = m\ln\left(\frac{1-q}{1-q\re^{\alpha}}\right)  
\label{eq:w84}
\end{eqnarray}
for $\alpha \in  \{w \in {\mathds C}\, |\, {\rm Re}\,w < -\ln q\}$. There are two representations for 
the negative binomial process (Kozubowski \& Podg\'orski 2009; Brody \textit{at al}. 2012). The 
first of these is a compound Poisson process for which the jump size $J\in{\mathds N}$ has a 
logarithmic distribution  
\begin{eqnarray}
{\mathbb P_0}(J= n) = - \frac {1}{\ln (1-q)} \,\,\frac { 1 } { n } \, q^n\, , 
\label{NB jump distribution}
\end{eqnarray}
and the intensity  of the Poisson process determining the timing of the jumps is given by 
$\lambda = -m\ln(1-q)$. 
One finds that the characteristic function of $J$ is
\begin{eqnarray}
\phi_0(\alpha) := \mathbb E^{\mathbb P_0} [\exp (\alpha J)] = 
\frac { \ln (1-q \re^\alpha)}  { \ln (1-q)} 
\end{eqnarray}
for $\alpha \in  \{w \in {\mathds C}\, |\, {\rm Re}\,w < -\ln q\}$. Then if we set 
\begin{eqnarray}
n_t = \sum_{k=1}^{\infty} {\mathds 1}\{k \leq N_t\}\,J_k , 
\end{eqnarray}
where $\{N_t\}$ is a Poisson process with rate $\lambda$, and 
$\{J_k\}_{k \in {\mathds N}}$ denotes a collection of independent identical copies of 
$J$, representing the jumps, one deduces  that 
\begin{eqnarray}
{\mathbb P_0}(n_t= k) =  \frac{\Gamma(k+mt)}{\Gamma(mt)\Gamma(k+1)} q^k (1-q)^{mt}, 
\end{eqnarray}
and that the resulting exponent is given by (\ref{eq:w84}). 
The second representation of the 
negative binomial process makes use of the method of subordination. We take a 
Poisson process with rate $\Lambda = mq/(1-q)$, and  time-change it using a gamma 
subordinator $\{{\mathit\Gamma}_t\}$ with rate parameter $m$. The moment 
generating function thus obtained, in agreement with (\ref{eq:w84}), is
\begin{eqnarray}
{\mathbb E}^{\mathbb P_0}\left[\exp\big(\alpha N({\mathit\Gamma}_t)\big) \right ] =
{\mathbb E}^{\mathbb P_0}\left[\exp \left(\Lambda (\re^{\alpha} - 1) 
{\mathit\Gamma}_t \right)\right] 
= \left( \frac{1-q}{\,1-q\re^{\alpha}}\right)^{mt} .
\end{eqnarray}
With these results in mind, we fix a probability space 
$({\mathit\Omega},{\mathcal F},{\mathbb P})$ and find:

\begin{proposition}
Let $\{{\mathit\Gamma}_t\}$ be a gamma subordinator with rate $m$, let $\{N_t\}$ be 
an independent Poisson process with rate $m$, let the independent random variable $X$ satisfy 
$X <-\ln q$ almost surely, and set 
\begin{eqnarray}
{\mathit\Gamma}^X_t = \left( \frac{q \re^X}{1-q\re^X} \right) {\mathit\Gamma}_t.
\end{eqnarray} 
Then the process $\{\xi_t\}$ defined by 
\begin{eqnarray}
\xi_t = N({\mathit\Gamma}_t^X ) 
\end{eqnarray} 
is a L\'evy information process with message $X$ and negative binomial noise, with 
fiducial exponent \rm{(\ref{eq:w84})}.
\label{prop:12}
\end{proposition}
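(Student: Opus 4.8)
The plan is to verify directly that the process $\{\xi_t\}$ defined by $\xi_t = N({\mathit\Gamma}_t^X)$ satisfies the conditions of Definition~\ref{def:1}: conditionally on $\mathcal F^X$ it should have stationary and independent increments, with conditional exponent $\psi_0(\alpha+X)-\psi_0(X)$ for $\alpha\in\mathds C^{\rm I}$, where $\psi_0$ is given by (\ref{eq:w84}). Since $\{N_t\}$ and $\{{\mathit\Gamma}_t\}$ are $\mathbb P$-independent and both have stationary independent increments, and since ${\mathit\Gamma}^X_t$ is, conditionally on $X$, just a deterministic positive rescaling of the gamma subordinator, the process $\{N({\mathit\Gamma}^X_t)\}$ is $\mathcal F^X$-conditionally a subordinated L\'evy process and hence $\mathcal F^X$-conditionally L\'evy; this takes care of the stationarity and independence of increments.

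The substantive step is the computation of the conditional exponent. First I would condition on $X$ and on the whole path of the subordinator: using that $\{N_t\}$ is Poisson with rate $m$, one has ${\mathbb E}^{\mathbb P}[\exp(\alpha N(s))\,|\,X] = \exp(m(\re^\alpha-1)s)$ for deterministic $s\ge 0$, so that
\begin{eqnarray}
{\mathbb E}^{\mathbb P}\!\left[\exp(\alpha\,\xi_t)\,\big|\,X,\{{\mathit\Gamma}_u\}\right]
= \exp\!\left(m(\re^\alpha-1)\,{\mathit\Gamma}^X_t\right)
= \exp\!\left(m(\re^\alpha-1)\,\frac{q\re^X}{1-q\re^X}\,{\mathit\Gamma}_t\right).
\end{eqnarray}
Next I would take the remaining expectation over the gamma subordinator, conditionally on $X$. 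Recalling that ${\mathit\Gamma}_t = m^{-1}\gamma_t$ with $\{\gamma_t\}$ a standard gamma process of rate $m$, so that ${\mathbb E}^{\mathbb P}[\exp(\beta{\mathit\Gamma}_t)] = (1-\beta/m)^{-mt}$ for $\mathrm{Re}\,\beta < m$, and setting $\beta = m(\re^\alpha-1)\,q\re^X/(1-q\re^X)$, one obtains
\begin{eqnarray}
{\mathbb E}^{\mathbb P}\!\left[\exp(\alpha\,\xi_t)\,\big|\,X\right]
= \left(1 - (\re^\alpha-1)\,\frac{q\re^X}{1-q\re^X}\right)^{-mt}
= \left(\frac{1-q\re^X}{\,1-q\re^{\alpha+X}}\right)^{mt}.
\end{eqnarray}
Taking $t^{-1}$ times the logarithm gives $m\ln\!\big((1-q\re^X)/(1-q\re^{\alpha+X})\big)$, which is exactly $\psi_0(\alpha+X)-\psi_0(X)$ with $\psi_0$ as in (\ref{eq:w84}); this confirms the conditional exponent and hence, via Definition~\ref{def:1}, that $\{\xi_t\}$ is a L\'evy information process with message $X$ and negative binomial noise type.

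I would close by checking the domain and integrability technicalities. The compatibility condition $S_X\subset A$ holds because $X<-\ln q$ almost surely, which is precisely the requirement that $q\re^X<1$, so that $q\re^X/(1-q\re^X)$ is well defined and positive and the argument $\beta<m$ needed for the gamma moment generating function is satisfied for $\alpha\in\mathds C^{\rm I}$ (indeed $\re^{\alpha}$ lies on the unit circle, so $\mathrm{Re}\big((\re^\alpha-1)q\re^X/(1-q\re^X)\big)\le 0$). Square integrability of $\{\xi_t\}$ under $\mathbb P$ follows from the assumptions on $X$ together with the second-moment formulas implied by $\psi_0$. The main obstacle, such as it is, is purely bookkeeping: making sure the rescaling factor $q\re^X/(1-q\re^X)$ threads correctly through the two successive conditional expectations and that the resulting argument of the gamma generating function stays in its domain of validity; once the iterated-expectation structure is set up, the algebra collapses to the stated exponent.
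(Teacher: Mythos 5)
Your proposal is correct and follows essentially the same route as the paper: conditioning on $X$ (and the subordinator path), applying the Poisson moment generating function to reduce to a gamma expectation, and evaluating it to obtain $\left((1-q\re^{X})/(1-q\re^{X+\alpha})\right)^{mt}$, whence the conditional exponent $\psi_0(X+\alpha)-\psi_0(X)$. The additional remarks on conditional stationarity/independence of increments and the domain of the gamma generating function are consistent with, and slightly more explicit than, the paper's verification.
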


%
%
\begin{proof}
This can be verified by direct calculation. For $\alpha \in {\mathds C^{\rm I} }$ we have: 
\begin{eqnarray}
{\mathbb E}^{\mathbb P}\left[\re^{\alpha \xi_t}|X\right] 
&=& 
 \left. {\mathbb E}^{\mathbb P}\left[\exp(\alpha 
N({\mathit\Gamma}_t^X))\right| X\right] 
= {\mathbb E}^{\mathbb P} 
\left[ \left. \exp\left(m\frac{q\re^{X}}{1-q\re^{X}}(\re^{\alpha}-1)\mathit \Gamma_t \right)
\right|X \right] 
\nonumber \\ 
&&\quad=
\left(1-\frac{q\re^{X}\left(\re^{\alpha}-1\right)}{1-q\re^{X}} \right)^{-mt}  
= 
\left(\frac{1-q\re^{X}}{1-q\re^{X+\alpha}}\right)^{mt} , 
\end{eqnarray}
which by  (\ref{eq:w84}) shows that the conditional exponent is $\psi_0(X+\alpha) - \psi_0(X) $. 
 \qed 
\end{proof}

There is also a representation for negative binomial information based on the compound 
Poisson process. This can be obtained by an application of Proposition~\ref{prop:6}, which 
shows how the L\'evy measure transforms under a random Esscher transformation. In the 
case of a negative binomial process with parameters $m$ and $q$, the L\'evy measure is 
given by
\begin{eqnarray}
\nu ( {\rd} z) = m \sum_{n=1}^{\infty} \frac{1}{n} \, q^n \, \delta_n ({\rd} z),
\end{eqnarray} 
where $\delta_n ({\rm d} z)$ denotes the Dirac measure with unit mass at the point $z=n$. 
The L\'evy measure is finite in this case, and we have $\nu (\mathds R) = -m \ln (1-q)$, 
which is the overall rate at which the compound Poisson process jumps. If one normalises 
the L\'evy measure with the overall jump rate, one obtains the probability measure 
(\ref{NB jump distribution}) for the jump size. With these facts in mind, we fix a probability 
space $({\mathit\Omega},{\mathcal F},{\mathbb P})$ and specify the constants $m$ and $q$, 
where $m>1$ and $0 < q < 1$. Then as a consequence of Proposition 6 we have the following:

\begin{proposition}
Let the random variable $X$ satisfy 
$X <-\ln q$ almost surely, let the random variable $J^X$ have the conditional distribution
\begin{eqnarray}
{\mathbb P}(J^X= n \,| \, X) = - \frac {1}{\ln (1-q\re^X)} \,\,\frac { 1 } { n } \, (q\re^X)^n\, , 
\end{eqnarray}
let $\{J^X_k\}_{k \in {\mathds N}}$ be a collection of conditionally independent identical copies 
of $J^X$, and let $\{N_t\}$ be an independent Poisson process with rate $m$. Then the process 
$\{\xi_t\}$ defined by
\begin{eqnarray}
\xi_t = \sum_{k=1}^{\infty} {\mathds 1}\{k \leq N(-\ln (1-q\re^X)t )\}\,J^X_k
\end{eqnarray}
is a L\'evy information process with message $X$ and negative binomial noise, with fiducial exponent 
\rm{(\ref{eq:w84})}.
\label{prop:13}
\end{proposition}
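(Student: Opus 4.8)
The plan is to verify Definition~\ref{def:1} directly, by computing the $\mathcal F^X$-conditional characteristic function of $\{\xi_t\}$ and checking that it equals $\exp\big((\psi_0(X+\alpha)-\psi_0(X))t\big)$ for $\alpha\in\mathds C^{\rm I}$. The process $\{\xi_t\}$ is, conditional on $X$, a compound Poisson process: the timing of jumps is governed by a Poisson process of rate $-m\ln(1-q\re^X)$, and the jump sizes are conditionally i.i.d.\ copies of $J^X$, which given $X$ has the logarithmic distribution with parameter $q\re^X$. So conditionally on $\mathcal F^X$ the increments are plainly stationary and independent, and it remains only to identify the conditional exponent.

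First I would record the conditional characteristic function of a single jump. By the standard logarithmic-series generating function,
\begin{eqnarray}
\phi_X(\alpha):=\mathbb E^{\mathbb P}\!\left[\re^{\alpha J^X}\,\big|\,X\right]
= \frac{\ln(1-q\re^{X+\alpha})}{\ln(1-q\re^{X})},
\end{eqnarray}
valid for $\alpha\in\mathds C^{\rm I}$ since $\mathrm{Re}(X+\alpha)=X<-\ln q$ almost surely, so that $|q\re^{X+\alpha}|<1$. Next I would use the compound-Poisson exponent formula: with $N$ of rate $\Lambda_X:=-m\ln(1-q\re^X)$ and i.i.d.\ jumps of characteristic function $\phi_X$,
\begin{eqnarray}
t^{-1}\ln\mathbb E^{\mathbb P}\!\left[\re^{\alpha\xi_t}\,\big|\,X\right]
= \Lambda_X\big(\phi_X(\alpha)-1\big)
= -m\ln(1-q\re^{X})\left(\frac{\ln(1-q\re^{X+\alpha})}{\ln(1-q\re^{X})}-1\right).
\end{eqnarray}
The logarithms then cancel, leaving $-m\ln(1-q\re^{X+\alpha})+m\ln(1-q\re^{X})$, which is exactly $\psi_0(X+\alpha)-\psi_0(X)$ by the definition~(\ref{eq:w84}) of the fiducial exponent. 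Alternatively, and perhaps more cleanly, I would simply invoke Proposition~\ref{prop:6}: starting from the negative binomial fiducial characteristic with L\'evy measure $\nu(\rd z)=m\sum_{n\ge1}n^{-1}q^n\delta_n(\rd z)$, the random Esscher transformation by $X$ rescales $\nu(\rd z)\to\re^{Xz}\nu(\rd z)=m\sum_{n\ge1}n^{-1}(q\re^X)^n\delta_n(\rd z)$; normalising this by its total mass $-m\ln(1-q\re^X)$ gives precisely the stated conditional jump law of $J^X$ with overall jump rate $-m\ln(1-q\re^X)$, and since the drift and Gaussian parts of the negative binomial characteristic vanish, Proposition~\ref{prop:6} tells us the transformed process is exactly the compound Poisson process described, with conditional exponent $\psi_0(X+\alpha)-\psi_0(X)$.

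The compatibility condition $S_X\subset A$ holds because $A=\{w:\mathrm{Re}\,w<-\ln q\}$ and $X<-\ln q$ almost surely by hypothesis; this also ensures $q\re^X<1$, so that $\Lambda_X>0$ and the logarithmic distribution is well-defined. I do not expect any serious obstacle here: the only point requiring a little care is the domain of validity of $\phi_X$ and the observation that the conditional construction genuinely yields stationary independent increments given $\mathcal F^X$ (which is immediate because, conditionally on $X$, the process is an ordinary compound Poisson process built from an independent Poisson clock and independent jumps). The hard part, to the extent there is one, is purely bookkeeping: making sure the two representations of the negative binomial process quoted earlier in the section are matched consistently so that the rate $m$ of $\{N_t\}$ and the time-change factor $-\ln(1-q\re^X)$ combine correctly, which is exactly what the cancellation of logarithms above confirms.
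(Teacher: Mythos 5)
Your argument is correct, and in fact it contains the paper's own reasoning as your ``alternative'' route: the paper offers no separate computational proof of Proposition~\ref{prop:13}, but simply derives it as a consequence of Proposition~\ref{prop:6}, noting that the negative binomial L\'evy measure $\nu(\rd z)=m\sum_{n\geq 1}n^{-1}q^n\delta_n(\rd z)$ is tilted to $\re^{Xz}\nu(\rd z)$, whose total mass $-m\ln(1-q\re^X)$ is the new overall jump rate and whose normalisation is exactly the stated logarithmic law of $J^X$. Your primary argument --- computing the conditional characteristic function of $J^X$ as $\ln(1-q\re^{X+\alpha})/\ln(1-q\re^{X})$, multiplying by the conditional jump rate $-m\ln(1-q\re^X)$ via the compound-Poisson exponent formula, and observing the cancellation of logarithms to obtain $\psi_0(X+\alpha)-\psi_0(X)$ --- is a direct verification of Definition~\ref{def:1} that unwinds what Proposition~\ref{prop:6} packages abstractly. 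What the explicit route buys is a self-contained check of the domain issues (that $|q\re^{X+\alpha}|<1$ for $\alpha\in\mathds{C}^{\rm I}$ given $X<-\ln q$) and of the bookkeeping between the rate $m$ of $\{N_t\}$ and the time-change factor $-\ln(1-q\re^X)$; what the paper's route buys is brevity and the structural insight that the jump-size law and jump rate are not ad hoc but are forced by the random Esscher tilt of the fiducial L\'evy measure. Both are sound, and your observation that conditional stationarity and independence of increments are immediate from the conditional compound-Poisson structure correctly disposes of the remaining requirement of Definition~\ref{def:1}.
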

 
\vspace{0.3cm}
\noindent {\bf Example 6: Inverse Gaussian information}. 
The inverse Gaussian (IG) distribution appears in the study of the first exit time of Brownian 
motion with drift (Schr\"odingier 1915). The name ``inverse Gaussian'' was introduced by 
Tweedie (1945), and a L\'evy process whose increments have the IG distribution was 
introduced in Wasan (1968). By an IG process with parameters $a>0$ and $b>0$,  we mean 
a L\'evy process with exponent 
\begin{eqnarray}
\psi_0(\alpha) = a\left(b-\sqrt{b^2-2\alpha}\right)
\label{eq:x60}
\end{eqnarray}
for $\alpha \in \{w \in {\mathds C}\, |\, 0\leq {\rm Re}\,w < \half b^2\}$. Let us 
write $\{G_t\}$ for the IG process. The probability density function for $G_t$ is
\begin{eqnarray}
{\mathbb P_0}(G_t \in \rd x) =  {\mathds 1}\{x>0\}\, \frac{at}{\sqrt{2\pi x^3}}\, 
\exp\left( -\frac{(bx-at)^2}{2x} \right) \rd x,  
\end{eqnarray}
and we find that  ${\mathbb E}^{\mathbb P_0}[G_t]=at/b$ and that 
${\rm Var}^{\mathbb P_0}[G_t]=at/b^3$. It is straightforward to check that under the 
Esscher transformation ${\mathbb P}_0 \rightarrow {\mathbb P}_{\lambda}$ induced 
by (\ref{Esscher martingale}), where $0<\lambda <\half b^2$, the parameter $a$ is left 
unchanged, whereas $b \rightarrow  (b^2-2\lambda)^{1/2}$. With these facts in mind we 
are in a position to introduce the associated information process. We fix a probability 
space $({\mathit\Omega},{\mathcal F},{\mathbb P})$ and find the following:  

\begin{proposition}
Let $G(t)$ be an inverse Gaussian process with parameters $a$ and $b$, let $X$ be an 
independent random variable satisfying $0<X<\half b^2$ almost surely, and set 
$Z= b^{-1}(b^2 - 2X)^{1/2}$. Then the process $\{\xi_t\}$ defined by 
\begin{eqnarray}
\xi_t = Z^{-2} G(Z t)
\end{eqnarray}
is a L\'evy  information process with message $X$ and inverse Gaussian noise, with fiducial 
exponent {\rm (\ref{eq:x60})}. 
\label{prop:14}
\end{proposition}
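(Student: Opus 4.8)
The plan is to verify the defining conditions of Definition~\ref{def:1} directly: conditionally on ${\mathcal F}^X$, the process $\{\xi_t\}$ must have stationary and independent increments, with conditional L\'evy exponent equal to $\psi_0(\alpha + X) - \psi_0(X)$ for $\alpha \in \mathds{C}^{\rm I}$, where $\psi_0$ is the exponent (\ref{eq:x60}).

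First I would note that, because $X$ and $\{G_t\}$ are ${\mathbb P}$-independent, the random variable $Z = b^{-1}(b^2 - 2X)^{1/2}$ is ${\mathcal F}^X$-measurable, and the hypothesis $0 < X < \half b^2$ forces $0 < Z < 1$ almost surely. Conditioning on ${\mathcal F}^X$ thus simply fixes the positive constant $Z$, and the map $t \mapsto \xi_t = Z^{-2} G(Zt)$ is then obtained from the inverse Gaussian L\'evy process $\{G_t\}$ by the deterministic time change $t \mapsto Zt$ composed with the deterministic spatial rescaling $x \mapsto Z^{-2} x$. Hence $\{\xi_t\}$ inherits conditionally stationary and independent increments, and it remains only to compute its conditional exponent.

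Recalling that a L\'evy process with exponent $\psi$ acquires exponent $c\,\psi(\alpha)$ under the deterministic time change $t \mapsto ct$ and exponent $\psi(d\alpha)$ under spatial rescaling by $d$, and that $\{G_t\}$ itself has exponent $\psi_0$ given by (\ref{eq:x60}), I would obtain, for $\alpha \in \mathds{C}^{\rm I}$,
\[
t^{-1}\ln {\mathbb E}^{\mathbb P}\!\left[\re^{\alpha\xi_t}\,|\,{\mathcal F}^X\right] = Z\,\psi_0\!\left(Z^{-2}\alpha\right) = a\left(bZ - \sqrt{b^2 Z^2 - 2\alpha}\right),
\]
where the square root is unambiguous since $b^2 Z^2 > 0$ while ${\rm Re}\,\alpha = 0$. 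It then remains to identify the right-hand side with the expression (\ref{eq:zz8}). From the definition of $Z$ one has $b^2 Z^2 = b^2 - 2X$, so that $\sqrt{b^2 - 2X} = bZ$ and $\sqrt{b^2 - 2(\alpha + X)} = \sqrt{b^2 Z^2 - 2\alpha}$; inserting these into
\[
\psi_0(\alpha + X) - \psi_0(X) = a\left(\sqrt{b^2 - 2X} - \sqrt{b^2 - 2(\alpha + X)}\right)
\]
reproduces $a(bZ - \sqrt{b^2 Z^2 - 2\alpha})$, so the two expressions agree and the conditions of Definition~\ref{def:1} are met.

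The argument is essentially algebraic, so I do not anticipate a genuine obstacle. The points that need care are the elementary scaling identity $Z^{-2}G(Zt) \leftrightarrow Z\,\psi_0(Z^{-2}\alpha)$ for the conditional exponent, and the observation that the constraint $0 < X < \half b^2$ keeps $Z$ real and strictly positive, so that $b^2 Z^2 > 0$ and the conditional exponent is well defined on all of $\mathds{C}^{\rm I}$, consistently with the stated domain of $\psi_0$.
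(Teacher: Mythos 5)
Your proposal is correct and follows essentially the same route as the paper: conditionally on ${\mathcal F}^X$ the process is an IG process subjected to a deterministic time change and rescaling, and the conditional exponent computation $Z\,\psi_0(Z^{-2}\alpha) = a\bigl(\sqrt{b^2-2X}-\sqrt{b^2-2(\alpha+X)}\bigr) = \psi_0(\alpha+X)-\psi_0(X)$ is exactly the calculation in the paper's proof, merely organised via the standard scaling rules for L\'evy exponents.
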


\begin{proof}
It should be evident by inspection that $\{\xi_t\}$ is $\mathcal F^X$-conditionally L\'evy. Let 
us therefore work out the conditional exponent. For $\alpha \in {\mathds C^{\rm I} }$ we have:
\begin{eqnarray}
{\mathbb E}^{\mathbb P}\left[\exp(\alpha \,\xi_t)|X\right] && \nonumber \\
&& \hspace{-2.3cm} 
= \, {\mathbb E}^{\mathbb P}\left[ 
\left. \exp\left(\alpha \, \frac{ b^2}{b^2-2X}\, G\left(b^{-1}\sqrt{b^2-2X}\, t\right)\right)\right| X \right]
\nonumber \\
&& \hspace{-2.3cm} = \,\exp \left(at\left(\sqrt{b^2-2X}-\sqrt{b^2-2(\alpha +X)}\right)\right)
\nonumber \\
&& \hspace{-2.3cm} = \, 
\exp \left(at\left(b-\sqrt{b^2-2(\alpha +X)}\right) - at\left( b -\sqrt {b^2-2X}\right)\right), 
\end{eqnarray}
which shows that the conditional exponent is of the form $\psi_0(\alpha+X)-\psi_0(X)$. 
 \qed 
\end{proof}

\vspace{0.3cm}
\noindent {\bf Example 7: Normal inverse Gaussian information}.
By a normal inverse Gaussian (NIG) process (Rydberg 1997, Barndorff-Nielsen 1998) 
with parameters $a$, $b$, and $m$, such that $a>0$, $|b| < a$, and $m>0$, we mean a 
L\'evy process with an exponent of the form
\begin{eqnarray}
\psi_0(\alpha) = m \left(\sqrt{a^2 -b^2}-\sqrt{a^2-(b+\alpha)^2}\right)  
\label{eq:z69} 
\end{eqnarray}
for $\alpha\in \{w\in{\mathds C}: { -a-b  <\rm Re}\,w<a-b\}$. Let us 
write $\{I_t\}$ for the NIG process. The probability density for its value 
at time $t$ is given by
\begin{eqnarray}
{\mathbb P_0}(I_t \in \rd x) = \frac{am t K_1\big( a\sqrt{m^2t^2+x^2} \big)}
{\pi\sqrt{m^2t^2+x^2}}\, 
\exp\left( m t\sqrt{a^2-b^2}+bx \right) \rd x,  
\end{eqnarray}
where $K_\nu$ is the modified Bessel function of third kind (Erd\'elyi 1953). 
The 
NIG process can be 
represented as a Brownian motion subordinated by an IG process. In particular, let 
$\{B_t\}$ be a standard Brownian motion, let $\{G_t\}$ be an independent IG  process with 
parameters $a'$ and $b'$, and set $a' =1$ and $b' = m (a^2-b^2)^{1/2}$. Then the 
characteristic function of the process $\{I_t\}$ defined by 
\begin{eqnarray}
I_t = b m^2G_t + m B(G_t)
\end{eqnarray}
 is 
given by (\ref{eq:z69}). The associated information process is constructed as follows. We 
fix a probability space $({\mathit\Omega},{\mathcal F},{\mathbb P})$ and the parameters 
$a$, $b$, and $m$. 

\begin{proposition}
Let the random variable $X$ satisfy $-a-b  < X <a-b$ almost surely, let $\{G_t^X\}$ be 
$\mathcal F^X$-conditionally IG, with parameters $a' = 1$ and $b' =m (a^2-(b+X)^2)^{1/2}$, 
and let $F_t = m^2 G_t^X$. Then the process $\{\xi_t\}$ defined by 
\begin{eqnarray}
\xi_t = (b+X) F_t + B(F_t)
\end{eqnarray}
is a L\'evy  information process with message $X$ and NIG noise, with fiducial exponent 
{\rm (\ref{eq:z69})}. 
\label{prop:15}
\end{proposition}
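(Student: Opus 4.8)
The plan is to follow the same three-step strategy used in the proofs of Propositions~\ref{prop:9} and~\ref{prop:14}: first verify that $\{\xi_t\}$ is ${\mathcal F}^X$-conditionally L\'evy, then compute its ${\mathcal F}^X$-conditional exponent explicitly, and finally check that the answer has the form $\psi_0(\alpha+X)-\psi_0(X)$ with $\psi_0$ the NIG exponent~(\ref{eq:z69}), thereby matching Definition~\ref{def:1}.

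For the first step I would observe that, conditional on ${\mathcal F}^X$, the subordinator $\{G^X_t\}$ is an inverse Gaussian L\'evy process with parameters $a'=1$ and $b'=m\sqrt{a^2-(b+X)^2}$; these are well defined, with $b'>0$, precisely because the compatibility assumption $-a-b<X<a-b$ forces $|b+X|<a$. Hence $F_t=m^2G^X_t$ is conditionally a rescaled inverse Gaussian subordinator, and since $\{B_t\}$ is independent of it, the process $\xi_t=(b+X)F_t+B(F_t)$ is conditionally a Brownian motion with drift subordinated by an independent subordinator, and so inherits conditionally stationary and independent increments. This is exactly the conditional analogue of the subordination representation of the NIG process recalled just before the proposition.

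For the second step I would compute the conditional characteristic function in two stages. Conditioning additionally on the path $\{F_s\}_{s\ge0}$, the Gaussian term contributes $\exp(\half\alpha^2F_t)$ and the drift contributes $\exp((b+X)\alpha F_t)$, so that
\begin{eqnarray}
{\mathbb E}^{\mathbb P}\big[\exp(\alpha\xi_t)\,\big|\,{\mathcal F}^X,\{F_s\}_{s\ge0}\big]
=\exp\!\Big(\big((b+X)\alpha+\half\alpha^2\big)F_t\Big)
\label{eq:prop15a}
\end{eqnarray}
for $\alpha\in\mathds C^{\rm I}$. Writing $\beta=m^2\big((b+X)\alpha+\half\alpha^2\big)$ and using the inverse Gaussian exponent $t^{-1}\ln{\mathbb E}^{\mathbb P}[\exp(\beta G^X_t)\,|\,{\mathcal F}^X]=b'-\sqrt{b'^2-2\beta}$, the remaining expectation over $G^X_t$ yields $\exp\big(t(b'-\sqrt{b'^2-2\beta})\big)$. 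The algebraic heart of the argument is then the identity
\begin{eqnarray}
b'^2-2\beta &=& m^2\big(a^2-(b+X)^2\big)-2m^2\big((b+X)\alpha+\half\alpha^2\big) \nonumber \\
&=& m^2\big(a^2-(b+X+\alpha)^2\big),
\label{eq:prop15b}
\end{eqnarray}
from which $t^{-1}\ln{\mathbb E}^{\mathbb P}[\exp(\alpha\xi_t)\,|\,{\mathcal F}^X]=m\sqrt{a^2-(b+X)^2}-m\sqrt{a^2-(b+X+\alpha)^2}$, which is precisely $\psi_0(\alpha+X)-\psi_0(X)$ with $\psi_0$ as in~(\ref{eq:z69}); this verifies the conditions of Definition~\ref{def:1}.

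I do not anticipate a genuine conceptual obstacle: the work is bookkeeping rather than novelty. The points that need a little care are (i) justifying the order of conditioning in the two-stage computation, i.e.\ that conditioning on ${\mathcal F}^X$ really does reduce matters to the known unconditional subordination formula for the NIG process; and (ii) checking that for $\alpha\in\mathds C^{\rm I}$ the quantity $b'^2-2\beta$ has strictly positive real part, so that the square roots and all the expectations involved are finite and unambiguous. Writing $\alpha=\ri u$ one finds ${\rm Re}(b'^2-2\beta)=m^2\big(a^2-(b+X)^2+u^2\big)>0$, which again rests on $|b+X|<a$. Expanding~(\ref{eq:prop15b}) and matching with~(\ref{eq:z69}) then finishes the proof.
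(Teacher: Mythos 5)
Your proposal is correct and follows essentially the same route as the paper's own proof: integrate out the Brownian term conditionally on the subordinator to reduce the conditional exponent to an inverse Gaussian expectation, then apply the conditional IG exponent of $\{G^X_t\}$ and the identity $a^2-(b+X)^2-2\big((b+X)\alpha+\tfrac{1}{2}\alpha^2\big)=a^2-(b+X+\alpha)^2$ to recognise $\psi_0(\alpha+X)-\psi_0(X)$. Your additional remarks on the conditional L\'evy property and on ${\rm Re}\,(b'^2-2\beta)>0$ for $\alpha\in\mathds C^{\rm I}$ merely make explicit points the paper leaves implicit.
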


\begin{proof}
We observe that the condition on $\{G_t^X\}$ is that
\begin{align}
\frac{1}{t} \ln {\mathbb E}^{\mathbb P}\!\left[ \exp\left(\alpha G_t^X\right) |X \right] 
= \delta \sqrt{a^2-(b+X)^2} \!-\! \sqrt{m^2(a^2-(b+X)^2)-2\alpha} 
\end{align}
for $\alpha\in{\mathds C^{\rm I}}$. 
Thus if we set  $\psi_X(\alpha) = {\mathbb E}^{\mathbb P}\left[ 
\exp (\alpha \xi_t) |X\right]$ for $\alpha\in{\mathds C}^{\rm I}$ it follows that
\begin{eqnarray} 
\hspace{0.5cm} 
\psi_X(\alpha) &=& {\mathbb E}^{\mathbb P} 
\left[\left.\exp \left( \alpha(b+X)F_t + \alpha B(F_t) \right)\right|X\right] 
\nonumber \\  &=&  
{\mathbb E}^{\mathbb P} \left[\left. 
\exp \left( (\alpha(b+X) + \half \alpha^2)m^2 G_t^X \right) \right| X \right]  \\ 
&& \hspace{-2.3cm} = 
{\mathbb E}^{\mathbb P}\! \left[ \exp \left( m t\sqrt{a^2-(b+X)^2} - 
m t \sqrt{a^2-(b+X)^2 - 2\left(\alpha(b+X)+\half \alpha^2\right)} \right) 
\right], \nonumber
\end{eqnarray}
which shows that the conditional exponent is of the required form. 
 \qed 
\end{proof}

\vspace{0.1cm}

Similar  arguments  lead to the construction of information processes based on various 
other L\'evy processes related to the IG distribution, including for example the 
generalised hyperbolic process (Barndorff-Nielsen 1977), for which the information 
process can be shown to take the form 
\begin{eqnarray}
\xi_t = (b+X) {\tilde G}_t + B({\tilde G}_t). 
\end{eqnarray}
Here the random variable $X$ is taken to be ${\mathbb P}$-independent of the standard 
Brownian motion $\{B(t)\}$, and $\{{\tilde G}_t\}$ is $\mathcal F^X$-conditionally a generalised 
IG process with parameters $(\delta, (a^2-(b+X)^2)^{1/2} ,\nu)$. It would be of interest to 
determine whether explicit models can be obtained for information processes based on the 
Meixner process  (Schoutens \& Teugels 1998) and the CGMY process 
(Carr \textit{et al}.~2002, Madan \& Yor 2008). 

We conclude this study of L\'evy information with the following remarks. Recent developments 
in the phenomenological representation of physical (Brody \& Hughston 2006) and economic 
(Brody \textit{et al}.~2008a) time series have highlighted the idea that signal processing 
techniques may have far-reaching applications to the identification, characterisation and 
categorisation of phenomena, both in the natural and in the social sciences, and that beyond 
the conventional remits of \textit{prediction}, \textit{filtering}, and \textit{smoothing} there 
is a fourth and important new domain of applicability: the \textit{description} of phenomena in 
science and in society. It is our hope therefore that the theory of signal processing with 
L\'evy information herein outlined will find a variety of interesting and exciting applications. 

\vspace{0.3cm}

\noindent {\footnotesize 
The research reported in this paper has been supported in part by Shell Treasury 
Centre Limited, London, and by the Fields Institute, University of Toronto.
The authors are grateful to N. Bingham, 
M. Davis, E. Hoyle, M. Grasselli, T. Hurd, S. Jaimungal,  E. Mackie, A. Macrina, P. Parbhoo and 
M. Pistorius for helpful comments and discussions.}


\end{document}